\newtheorem{theorem}{Theorem}[section]
\newtheorem{lemma}[theorem]{Lemma}
\newtheorem{corollary}[theorem]{Corollary}
\newtheorem{remark}[theorem]{Remark}
\newtheorem{proposition}[theorem]{Proposition}
\numberwithin{equation}{section}
\newcommand{\CC}{C_k}
\newcommand{\NN}{\mathbb{N}}
\newcommand{\CP}{\mathbf{cp}}
\newcommand{\IR}{\mathbb{R}}
\newcommand{\Pp}{\mathfrak{P}}
\newcommand{\e}{\varepsilon}
\renewcommand{\phi}{\varphi}
\newcommand{\supp}{\mathrm{supp}}
\newcommand{\spn}{\mathrm{span}}
\title[Locally convex spaces and Schur type properties]{Locally convex spaces and Schur type properties}
\author[S.~Gabriyelyan]{Saak Gabriyelyan}
\address{Department of Mathematics, Ben-Gurion University of the
Negev, Beer-Sheva, P.O. 653, Israel}
\email{saak@math.bgu.ac.il}
\subjclass[2000]{ 46A03, 46E10}
\keywords{Schur property, weak respecting property, Dunford--Pettis property, sequential Dunford--Pettis property}
\begin{document}

\begin{abstract}
In the main result of the paper we extend Rosenthal's characterization of Banach spaces with the Schur property by showing that for a quasi-complete locally convex space $E$ whose separable bounded sets are metrizable the following conditions are equivalent: (1) $E$ has the Schur property, (2) $E$ and $E_w$ have the same sequentially compact sets, where $E_w$ is  the space $E$ with the weak topology, (3) $E$ and $E_w$ have the same compact sets, (4) $E$ and $E_w$ have the same countably compact sets, (5) $E$ and $E_w$ have the same pseudocompact sets, (6) $E$ and $E_w$ have the same functionally bounded sets, (7) every bounded non-precompact sequence in $E$ has a subsequence which is equivalent to the unit  basis of $\ell_1$ and (8) every bounded non-precompact sequence in $E$ has a subsequence which is discrete and $C$-embedded in $E_w$.
\end{abstract}

\maketitle



\section{Introduction}


A locally convex space (lcs for short) $E$ is said to have the {\em  Schur property} if $E$ and $E_w$ have the same convergent sequences, where $E_w$ is  the space $E$ endowed with the weak topology $\sigma(E,E')$. By the classical Schur theorem the Banach space $\ell_1(\Gamma)$ has the Schur property for every set $\Gamma$.
The results of H.P.~Rosenthal in \cite{Rosenthal} (see also \S 2 of \cite{Bourgain-Ros}) show that a Banach space $E$ has the Schur property if and only if every $\delta$-separated sequence in the closed unit ball  of $E$ has a subsequence equivalent to the unit basis of $\ell_1$. Recall that a sequence $\{ x_n\}_{n\in\NN}$ is said to be $\delta$-separated if $\| x_i -x_j\|\geq \delta$ for all distinct $i,j\in\NN$. Since any bounded sequence in $E$ either is precompact or contains a $\delta$-separated subsequence for some $\delta>0$,  following \cite{HGM} the above characterization of Banach spaces with the Schur property can be formulated in a more convenient topological form.
\begin{theorem}[Rosenthal] \label{t:Rosenthal}
A Banach space $E$ has the Schur property if and only if every bounded non-precompact sequence in $E$ has a subsequence which is equivalent to the unit  basis of $\ell_1$.
\end{theorem}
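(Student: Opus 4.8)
The plan is to derive Theorem~\ref{t:Rosenthal} from two inputs: Rosenthal's $\ell_1$-dichotomy (every bounded sequence in a Banach space has either a weakly Cauchy subsequence or a subsequence equivalent to the unit basis of $\ell_1$), and the elementary observation, already noted above, that a bounded sequence in a Banach space is either precompact or has, for some $\delta>0$, a $\delta$-separated subsequence. Equivalently, one could reduce directly to the unit-ball formulation of the Schur property established in \cite{Rosenthal}, the passage between the two statements being exactly this observation; but the argument via the $\ell_1$-dichotomy is cleaner, so I will follow it.

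For the ``if'' direction, assume every bounded non-precompact sequence in $E$ has a subsequence equivalent to the unit basis of $\ell_1$, and let $(x_n)$ converge to $x$ in $\sigma(E,E')$. Replacing $x_n$ by $x_n-x$ we may assume $x_n\to 0$ weakly; then $(x_n)$ is bounded. If $(x_n)$ failed to converge to $0$ in norm, there would be $\delta>0$ and a subsequence with $\|x_{n_k}\|\ge\delta$ for all $k$. That subsequence is non-precompact, since any norm-convergent subsubsequence of it would converge simultaneously to $0$ (its weak limit) and to a vector of norm $\ge\delta$. Hence it has a further subsequence $(y_j)$ equivalent to the unit basis of $\ell_1$, say via an isomorphism $T$ of $\overline{\spn}\{y_j\}$ onto $\ell_1$ with $Ty_j=e_j$. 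But $(y_j)$ is weakly null in $E$, hence in $\overline{\spn}\{y_j\}$, so $(e_j)=(Ty_j)$ is weakly null in $\ell_1$, contradicting $\langle e_j,\mathbf 1\rangle=1$ for $\mathbf 1\in\ell_\infty=(\ell_1)'$. Therefore $x_n\to x$ in norm, and $E$ has the Schur property.

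For the ``only if'' direction, assume $E$ has the Schur property and let $(x_n)$ be bounded and non-precompact. I first record the standard fact that in a Schur space every weakly Cauchy sequence $(z_k)$ converges in norm: any two interlacing subsequences $(z_{p_k})$, $(z_{q_k})$ with $p_k<q_k<p_{k+1}$ satisfy $z_{p_k}-z_{q_k}\to 0$ weakly, hence in norm by the Schur property; were $(z_k)$ not norm Cauchy one could choose such a pair with $\|z_{p_k}-z_{q_k}\|\ge\varepsilon$, a contradiction, so $(z_k)$ is norm Cauchy and, by completeness of $E$, norm convergent. Now suppose, for contradiction, that no subsequence of $(x_n)$ is equivalent to the unit basis of $\ell_1$. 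By Rosenthal's $\ell_1$-theorem every subsequence of $(x_n)$ then has a weakly Cauchy subsubsequence, which by the previous step is norm convergent. Thus every subsequence of $(x_n)$ has a norm-convergent subsubsequence, so $(x_n)$ is precompact---a contradiction. Hence some subsequence of $(x_n)$ is equivalent to the unit basis of $\ell_1$.

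The whole weight of the argument sits in Rosenthal's $\ell_1$-dichotomy, which is the deep ingredient; granting it, the remaining steps---the unit $\ell_1$-basis is not weakly null, the Schur property implies weak sequential completeness, and the precompactness/separation alternative---are routine. A secondary point that must be handled carefully is the difference between a sequence being precompact and merely having a precompact subsequence; this is why the ``only if'' argument has to quantify over all subsequences of $(x_n)$ rather than work with $(x_n)$ itself.
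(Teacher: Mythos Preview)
Your proof is correct. Both directions are handled cleanly: the ``if'' direction uses that a weakly null sequence equivalent to the $\ell_1$-basis would force $(e_n)$ to be weakly null in $\ell_1$, and the ``only if'' direction combines Rosenthal's $\ell_1$-dichotomy with the weak sequential completeness of Schur spaces. The one place that needs a moment's thought---that ``every subsequence has a convergent subsubsequence'' forces the range of $(x_n)$ to be totally bounded---is standard in a complete metric space and causes no trouble.

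The paper, however, does not actually prove Theorem~\ref{t:Rosenthal}; it records it as a known result and justifies it in one line in the paragraph preceding the statement. The paper's route is: take as a black box the formulation from \cite{Rosenthal,Bourgain-Ros} that $E$ is Schur iff every $\delta$-separated sequence in the unit ball has an $\ell_1$-subsequence, and then observe (as you also note) that a bounded sequence is either precompact or has a $\delta$-separated subsequence. You instead take Rosenthal's $\ell_1$-dichotomy as the black box and argue directly, which is the approach you explicitly say you prefer. The two routes are close cousins---both rest entirely on Rosenthal's work---but yours is more self-contained as written, while the paper's is a pure citation-plus-reformulation. Either is acceptable; your version has the advantage that the only external input is the $\ell_1$-theorem itself, rather than a derived characterization of the Schur property.
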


Another characterization of Banach spaces with the Schur property was obtained recently by P.N. Dowling, D. Freeman, C.J. Lennard, E. Odell, B. Randrianantoanina and B. Turett in \cite{DowlingFree-1}: A Banach space $E$ has the Schur property if and only if every weakly compact subset of $E$ is contained in the closed convex hull of a weakly null sequence. A short proof of this result is given by W.B. Johnson, R. Lillemets and E. Oja in \cite{JLO}.
For other characterizations of Banach spaces with the Schur property see \cite{BFV,HGM,Pethe-Thakare}.

It is well known that a Banach space $E$ has the Schur property if and only if $E$ and $E_w$ have the same compact sets, and therefore, by  the Eberlein--\v{S}mulyan theorem, if and only if $E$ and $E_w$ have the same sequentially compact sets if and only if $E$ and $E_w$ have the same countably compact sets if and only if $E$ and $E_w$ have the same pseudocompact sets (the last equivalence follows from a result of V.~Pt\'{a}k, see \cite[\S~24.3(7)]{Kothe}). An analogous statement holds also for strict $(LF)$-spaces, see \cite[\S~24.3(9)]{Kothe}. Moreover, a remarkable result of M.~Valdivia \cite{Valdivia-77} (which states that $E_w$ is a $\mu$-space for every quasi-complete space $E$) easily implies the following assertion: If $E$ and $E_w$ have the same compact sets, then $E$ and $E_w$ have the same functionally bounded subsets. Note that in this assertion one cannot replace compact sets by convergent sequences even for complete spaces, see Proposition \ref{p:Schur-non-Glicksberg} below.

For a Tychonoff (=completely regular Hausdorff) space $X$ and a  property $\mathcal{P}$, denote by $\mathcal{P}(X)$ the set of all subspaces of $X$ with $\mathcal{P}$.
Analogously to the corresponding algebraic notion we shall say that a  locally convex space  $E$ {\em weakly respects $\mathcal{P}$} if $\mathcal{P}(E_w)=\mathcal{P}(E)$.
So every lcs $E$ weakly respects bounded sets, and the Schur property means that $E$ weakly respects the property $\mathcal{S}$ of being a convergent sequence. The following theorem extends Theorem \ref{t:Rosenthal} and is the main result of the paper (recall that an lcs $E$ is said to be {\em quasi-complete} if each closed bounded subset of $E$ is complete).

\begin{theorem} \label{t:respecting-quasi-complete-metriz}
Let $E$ be a quasi-complete lcs whose separable bounded sets are metrizable. Then  the following assertions are equivalent:
\begin{enumerate}
\item[{\rm (i)}] $E$ has the Schur property;
\item[{\rm (ii)}] $E$ weakly respects sequential compactness;
\item[{\rm (iii)}] $E$ weakly respects compactness;
\item[{\rm (iv)}] $E$ weakly respects countable compactness;
\item[{\rm (v)}] $E$ weakly respects pseudocompactness;
\item[{\rm (vi)}] $E$  weakly  respects functional boundedness;
\item[{\rm (vii)}]  every bounded non-precompact sequence in $E$ has a subsequence which is equivalent to the unit  basis of $\ell_1$;
\item[{\rm (viii)}] every bounded non-precompact sequence in $E$ has a subsequence which is discrete and $C$-embedded in $E_w$.
\end{enumerate}
If (i)-(viii) hold, then  every functionally bounded subset in $E_w$ is relatively compact in $E$.
\end{theorem}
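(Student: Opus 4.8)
The plan is to establish the chain (i)$\Rightarrow$(vii)$\Rightarrow$(viii)$\Rightarrow$(i), then (i)$\Rightarrow$[the last assertion]$\Rightarrow$ each of (ii)--(vi), and finally each of (ii)--(vi)$\Rightarrow$(i). Two standing reductions will be used throughout. (a)~Since $\sigma(E,E')$ is coarser than the original topology of $E$, for each property $\mathcal P$ occurring in (ii)--(vi) one has $\mathcal P(E)\subseteq\mathcal P(E_w)$ automatically, so ``$E$ weakly respects $\mathcal P$'' is exactly the inclusion $\mathcal P(E_w)\subseteq\mathcal P(E)$, and that is what we check. (b)~On any $E$-compact set $K$ the topology $\sigma(E,E')|_K$, being Hausdorff and coarser than the compact topology $\tau_E|_K$, must coincide with it; hence $\tau_E$ and $\sigma(E,E')$ agree on every $E$-compact set and on all of its subspaces. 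We also use the elementary implications for subspaces of a Tychonoff space (sequentially compact or compact $\Rightarrow$ countably compact $\Rightarrow$ pseudocompact $\Rightarrow$ functionally bounded), the hypothesis that a separable bounded subset of $E$ is metrizable, and Valdivia's theorem quoted in the introduction.

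\emph{The analytic core} (i)$\Rightarrow$(vii). Let $\{x_n\}$ be a bounded non-precompact sequence in $E$; after passing to a subsequence we may assume $p(x_i-x_j)\ge\delta$ for $i\ne j$, where $p$ is a continuous seminorm on $E$ and $\delta>0$. Put $D=\overline{\acx}\{x_n\}$: it is bounded, closed, absolutely convex, and complete since $E$ is quasi-complete, so the associated local Banach space $(E_D,\|\cdot\|_D)$ is a Banach space, the inclusion $E_D\hookrightarrow E$ is continuous, and $\{x_n\}$ is bounded and $\|\cdot\|_D$-separated in $E_D$. Apply Rosenthal's $\ell_1$-dichotomy in $E_D$: some subsequence $\{x_{n_k}\}$ is either weakly Cauchy in $E_D$ or equivalent to the unit basis of $\ell_1$. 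In the first case it is weakly Cauchy in $E$ as well (restrictions of members of $E'$ to $E_D$ lie in $E_D'$), so $x_{n_{2k}}-x_{n_{2k+1}}\to 0$ in $E_w$; under the Schur property this convergence holds in $E$, contradicting $p(x_{n_{2k}}-x_{n_{2k+1}})\ge\delta$. Therefore $\{x_{n_k}\}$ is equivalent to the unit basis of $\ell_1$, and, since it remains $\delta$-separated by the continuous seminorm $p$, it is so as a sequence in $E$; this is (vii).

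\emph{The remaining implications.} If $\{x_n\}$ is equivalent to the $\ell_1$-basis, its closed linear span $M$ is linearly homeomorphic to $\ell_1$, is closed in $E_w$, and --- every continuous functional on $M$ extending by Hahn--Banach to a member of $E'$ --- inherits from $\sigma(E,E')$ precisely the weak topology $\sigma(\ell_1,\ell_\infty)$ of $\ell_1$; in the latter the unit basis is discrete and $C$-embedded, so, transporting back, $\{x_n\}$ is discrete and $C$-embedded in $E_w$, i.e.\ (viii) holds. Now assume (i), hence (vii), and let $A$ be functionally bounded in $E_w$; by Valdivia's theorem $K:=\cl_{E_w}(A)$ is $E_w$-compact, and it is closed, bounded and (quasi-completeness) complete in $E$, so it suffices to show $K$ is precompact in $E$. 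If not, $K$ contains a sequence separated by some continuous seminorm; this sequence is bounded and non-precompact, so by (vii) a subsequence of it is equivalent to the $\ell_1$-basis, and with $M$ as above $K\cap M$ is a compact subset of $(\ell_1,\sigma(\ell_1,\ell_\infty))$ containing the unit basis --- whence the $\ell_1$-basis would be relatively weakly compact in $\ell_1$, which is impossible because in $\ell_1$ relative weak compactness coincides with relative norm compactness (Schur property together with the Eberlein--\v{S}mulyan theorem). Hence $K$, and with it $\cl_E(A)$, is $E$-compact: this is the last assertion of the theorem. By reduction~(b) $\tau_E$ and $\sigma(E,E')$ then agree on $K\supseteq A$, so $A\in\mathcal P(E)$ for every $\mathcal P$ from (ii)--(vi); this gives (ii)--(vi). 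For (viii)$\Rightarrow$(i): if $x_n\to x$ in $E_w$ but not in $E$, some subsequence is bounded away from $x$ in a continuous seminorm, hence non-precompact, so by (viii) it has a further subsequence $\{y_j\}$ which is discrete and $C$-embedded in $E_w$ with $y_j\to x$; extending the bounded function $y_j\mapsto(-1)^j$ to a member of $C(E_w)$ and letting $j\to\infty$ is a contradiction, so $E$ is Schur. Finally each of (ii)--(vi)$\Rightarrow$(i): given $x_n\to x$ in $E_w$, the set $C=\{x_n:n\in\NN\}\cup\{x\}$ is $E_w$-compact, hence sequentially compact, countably compact, pseudocompact and functionally bounded there; being countable and bounded it is closed and metrizable in $E$, so whichever of (ii)--(vi) is assumed forces $C$ to be compact in $E$ (for a closed metrizable subspace all these properties amount to compactness), and then, by (b), $x_n\to x$ in $E$.

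\emph{Main obstacle.} The hard part is (i)$\Rightarrow$(vii): leaving $E$ for the local Banach space $E_D$, invoking Rosenthal's $\ell_1$-dichotomy there, and carrying the conclusion back to $E$. The Schur hypothesis disposes of the weakly Cauchy alternative, but passing from ``equivalent to the $\ell_1$-basis in $E_D$'' to the corresponding statement for the sequence in $E$ is exactly the delicate point, relying on the precise locally convex notion of $\ell_1$-equivalence and on the sequence being separated by a continuous seminorm of $E$. A second, more technical, point is the structure of $\ell_1$-sequences inside $E_w$ used in (vii)$\Rightarrow$(viii): discreteness is immediate from the biorthogonal functionals, but the $C$-embeddedness requires the Hahn--Banach identification $\sigma(E,E')|_M=\sigma(\ell_1,\ell_\infty)$ and the corresponding --- not entirely obvious --- property of the unit basis of $\ell_1$.
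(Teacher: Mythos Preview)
Your outline is more direct than the paper's (which routes everything through the class $\mathbf{RES}$, Theorems~\ref{t:Rosenthal-Ruess-Schur-basic} and~\ref{t:Glicksberg-respecting-lcs}, and Ruess's theorem), and most of your implications are fine. But there is a genuine gap at the core step (i)$\Rightarrow$(vii): the passage from ``$\{x_{n_k}\}$ is equivalent to the $\ell_1$-basis in $E_D$'' to ``it is so as a sequence in $E$.'' The paper's notion of $\ell_1$-equivalence demands a linear \emph{topological} isomorphism from the $E$-closure $M_E$ of the span onto a subspace of $\ell_1$; in particular $(M_E,\tau_E|_{M_E})$ must be normable and the $E$-topology there must coincide with the transported $\ell_1$-norm. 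Knowing merely that the sequence is $\delta$-separated for one continuous seminorm $p$ gives nothing close to this: $p$ controls differences $x_i-x_j$, not arbitrary linear combinations, and the $E$-topology on $M_E=M_D$ can be strictly coarser than the $\|\cdot\|_D$-topology. Tellingly, your argument for this step never uses the hypothesis that separable bounded sets are metrizable---yet that hypothesis is exactly what Ruess needs (Theorem~\ref{ruess}) to obtain the Rosenthal property in the required $E$-topological sense. The paper bypasses the difficulty by quoting Ruess; your reduction to $E_D$ cannot replace that without reproducing his argument.

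Two smaller issues. In (vii)$\Rightarrow$(viii) you assert that the unit basis is $C$-embedded in $(\ell_1)_w$; this is not obvious and is not how the paper proceeds. What one gets cheaply (Lemma~\ref{l:l1-sequence}) is that an $\ell_1$-equivalent sequence is not weakly functionally bounded, so some $f\in C(E_w)$ is unbounded on it; a further subsequence with $|f(a_{n+1})|>|f(a_n)|+1$ is then discrete and $C$-embedded by the standard device, and that suffices for (viii). In (vi)$\Rightarrow$(i) the inference ``functionally bounded and metrizable $\Rightarrow$ compact'' is not valid as written, since functional boundedness is relative to the ambient space; use instead Lemma~\ref{l:func-bounded-precompact}(i) (functionally bounded $\Rightarrow$ precompact) together with quasi-completeness to conclude compactness.
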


Note that for any lcs $E$, (i) and (ii) are equivalent and conditions (iii)-(v)  imply the Schur property, see Proposition \ref{p:Schur=sequential-comp-LCS}. However, in general the Schur property does not imply the weak respecting compactness  (see \cite[Example~6 (p.~267)]{Wil} and  \cite[Example~19.19]{Dom} or the more general Proposition \ref{p:Schur-non-Glicksberg} below), and hence the condition of being metrizable for separable bounded sets in Theorem \ref{t:respecting-quasi-complete-metriz}  is essential.
We prove Theorem \ref{t:respecting-quasi-complete-metriz} in Section \ref{sec:Rosenthal} using (1) an extension of the Rosenthal $\ell_1$ theorem obtained recently by W.~Ruess \cite{ruess}, and (2) the aforementioned result of M.~Valdivia \cite{Valdivia-77}. As a corollary of Theorem \ref{t:respecting-quasi-complete-metriz} we provide numerous characterizations of the Schur property for strict $(LF)$-spaces, see  Corollary \ref{c:Schur-strict-LF}.

It is well known that every Banach space with the Schur property has the Dunford--Pettis property. Being motivated by this result and the results of  A.A. Albanese, J. Bonet and W.J. Ricker \cite{ABR} and J. Bonet and  M. Lindstr\"{o}m \cite{Bonet-Lin-93}, in the last section  we study convergent sequences in duals of locally convex spaces with the Schur property and Dunford--Pettis type properties and extend some results known for Banach spaces and Fr\'{e}chet spaces.


\section{Proof of Theorem \ref{t:respecting-quasi-complete-metriz}} \label{sec:Rosenthal}


A subset $A$ of a Tychonoff space  $X$ is called {\em functionally bounded in $X$} if every continuous real-valued function on $X$ is bounded on $A$, and $X$ is a {\em $\mu$-space} if every functionally bounded subset of $X$ has compact closure. Following \cite{Gab-Respected}, a topological space $X$ is a {\em countably $\mu$-space} if every countable functionally bounded  subset of $X$ has compact closure. Clearly, every $\mu$-space is a countably $\mu$-space, but the converse is not true  in general.

In what follows we consider the following families of compact type properties
\[
\mathfrak{P}_0 := \{ \mathcal{S}, \mathcal{C}, \mathcal{SC}, \mathcal{CC}, \mathcal{PC}\} \quad \mbox{ and } \quad \mathfrak{P} :=\mathfrak{P}_0 \cup \{\mathcal{FB}\},
\]
where $\mathcal{S}$, $\mathcal{C}$, $\mathcal{SC}$, $\mathcal{CC}$, $\mathcal{PC}$ or $\mathcal{FB}$  denote the property of being  a convergent sequence or being a compact, sequentially compact, countably compact, pseudocompact and functionally bounded subset of a topological space, respectively.

\begin{proposition} \label{pSubSchur-LCS}
Let $H$ be a subspace of an lcs $E$ and $\mathcal{P}\in \mathfrak{P}_0$. If $E$ weakly respects $\mathcal{P}$, then $H$ weakly respects $\mathcal{P}$ as well.
\end{proposition}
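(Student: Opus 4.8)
The plan is to exploit the standard relationship between the weak topology on a subspace and the subspace topology. The key observation is that if $H$ is a subspace of $E$, then the weak topology $\sigma(H, H')$ on $H$ coincides with the topology that $H$ inherits from $(E, \sigma(E, E'))$; that is, $H_w = (E_w)\uhr H$ as topological spaces. This is a classical fact (every $f \in H'$ extends, modulo the relevant restriction, to an element of $E'$ by Hahn--Banach, and conversely every $g \in E'$ restricts to an element of $H'$), and I would state it explicitly at the start of the argument.

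Granting this identification, the proof is essentially a diagram chase. Let $\mathcal{P} \in \mathfrak{P}_0$ and suppose $E$ weakly respects $\mathcal{P}$. First I would note that the inclusion $\mathcal{P}(H) \subseteq \mathcal{P}(H_w)$ is automatic for every lcs, since the weak topology is coarser than the original topology and each property in $\mathfrak{P}_0$ is preserved by passing to a coarser Hausdorff topology (a convergent sequence stays convergent; compact, sequentially compact, countably compact and pseudocompact sets stay such under a continuous surjection onto a Hausdorff space, in particular under the identity map to a weaker topology). For the reverse inclusion, take $A \in \mathcal{P}(H_w)$. By the identification above, $A$ is a subset of $E$ which, with its subspace topology from $E_w$, has property $\mathcal{P}$; since each $\mathcal{P} \in \mathfrak{P}_0$ is intrinsic (it depends only on the topological space $A$ itself, not on the ambient space), this says precisely that $A \in \mathcal{P}(E_w)$. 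As $E$ weakly respects $\mathcal{P}$, we get $A \in \mathcal{P}(E)$, i.e. $A$ with the topology inherited from $E$ has property $\mathcal{P}$. But the topology $A$ inherits from $E$ is the same as the topology it inherits from $H$ (since $H$ carries the subspace topology from $E$), so $A \in \mathcal{P}(H)$. Hence $\mathcal{P}(H_w) = \mathcal{P}(H)$, as required.

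I expect the only real point requiring care is the verification that the weak topology of the subspace $H$ agrees with the restriction of the weak topology of $E$ — everything else is either a definition-unwinding or an appeal to the elementary fact that the five properties in $\mathfrak{P}_0$ are topological invariants that pass to coarser Hausdorff topologies. It is worth remarking why $\mathcal{FB}$ is excluded from $\mathfrak{P}_0$: functional boundedness of $A$ in $X$ genuinely depends on the ambient space $X$ (it refers to continuous real-valued functions on $X$, not on $A$), so the "intrinsic" step of the argument breaks down for $\mathcal{P} = \mathcal{FB}$, which is exactly why the proposition is stated for $\mathfrak{P}_0$ rather than $\mathfrak{P}$.
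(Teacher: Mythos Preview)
Your proof is correct and follows essentially the same route as the paper's: both push $A\in\mathcal{P}(H_w)$ into $\mathcal{P}(E_w)$, apply the hypothesis on $E$, and then pull back to $\mathcal{P}(H)$ via the subspace topology. The only cosmetic difference is that you justify the first step by the Hahn--Banach identification $H_w=(E_w)\uhr H$ together with the intrinsic nature of each $\mathcal{P}\in\mathfrak{P}_0$, whereas the paper phrases it as preservation of $\mathcal{P}$ under the weakly continuous inclusion $i:H\to E$ (citing Engelking for the pseudocompact case).
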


\begin{proof}
Let $K\in \mathcal{P}(H_w)$. Denote by $i: H \to E$ the identity map. Then $i$ is weakly continuous. So $K=i(K)\in \mathcal{P}(E_w)$  (note that if $K$ is pseudocompact, then $i(K)$ is also pseudocompact by \cite[3.10.24]{Eng}). Hence $K\in \mathcal{P}(E)$. Therefore $K\in \mathcal{P}(H)$. Thus $H$ weakly respects $\mathcal{P}$.  \qed
\end{proof}

It is clear that if an lcs $E$ weakly respect compactness then it has the Schur property. Next proposition shows that the Schur property is  weaker than other properties from $\Pp$.
\begin{proposition}  \label{p:Schur=sequential-comp-LCS}
Let $(E,\tau)$ be a locally convex space. Then:
\begin{enumerate}
\item[{\rm (i)}] $E$  has the Schur property if and only if it weakly respects sequential compactness;
\item[{\rm (ii)}] if $E$  weakly respects countable compactness, then $E$ has the Schur property;
\item[{\rm (iii)}] if $E$  weakly respects pseudocompactness, then $E$ has the Schur property;
\item[{\rm (iv)}] if $E$ is a countably $\mu$-space and  weakly  respects functional boundedness, then $E$ has the Schur property;
\item[{\rm (v)}] if every functionally bounded subset of $E_w$ has compact closure in $E$, then $E$ weakly respects all properties $\mathcal{P}\in\Pp$ and $E_w$ is a $\mu$-space.
\end{enumerate}
\end{proposition}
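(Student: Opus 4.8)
The plan is to treat the five items in a natural order, establishing the easy implications first and isolating the one genuinely topological point. For (i), recall that a convergent sequence together with its limit is sequentially compact, and conversely in a sequentially compact subspace every sequence clusters; the subtlety is that $\tau$ and $\sigma(E,E')$ share the same convergent sequences precisely when they share the same sequentially compact subspaces. The forward direction is immediate since the identity $(E,\tau)\to E_w$ is continuous, so every $\tau$-sequentially compact set is $\sigma(E,E')$-sequentially compact. For the converse, suppose $E$ weakly respects sequential compactness and let $\{x_n\}$ converge weakly to $x$; then $\{x_n\}\cup\{x\}$ is sequentially compact in $E_w$, hence in $E$, so $\{x_n\}$ has a $\tau$-convergent subsequence; a standard subsequence-of-subsequence argument, using that the full sequence already converges weakly (so any $\tau$-cluster point must equal $x$), forces $x_n\to x$ in $\tau$. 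This gives the Schur property. The reverse implication of (i) also follows from this: if $E$ has the Schur property then weak and original convergent sequences coincide, and one checks that a subspace is $\sigma$-sequentially compact iff it is $\tau$-sequentially compact by applying the Schur property inside it.

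For (ii) and (iii) I would argue that both countable compactness and pseudocompactness of a metrizable-looking object force sequential behaviour on sequences. More precisely: given a weakly convergent sequence $\{x_n\}\to x$ in $E$ that does not $\tau$-converge, pass to a subsequence that is $\tau$-separated from $x$, so we may assume no subsequence $\tau$-converges to $x$; since $x$ is the only possible weak cluster point, the set $D=\{x_n : n\in\NN\}$ has the property that every infinite subset has $x$ as its unique $\sigma$-cluster point but $x$ may or may not lie in $D$. Then $D\cup\{x\}$ is countably compact (indeed compact, as a convergent sequence) in $E_w$ but fails to be countably compact, respectively pseudocompact, in $(E,\tau)$ — for pseudocompactness one exhibits an unbounded continuous function on the closed discrete-looking set, using that $\{x_n\}$ does not converge to produce a locally finite family of open sets separating the points. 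This contradicts the hypothesis, so $E$ has the Schur property. (These are exactly the Banach-space arguments transcribed to the locally convex setting; the point is only that no completeness is needed.)

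For (iv), assume $E$ is a countably $\mu$-space and weakly respects functional boundedness; take a weakly null sequence $\{x_n\}$ and note $D=\{x_n:n\in\NN\}$ is functionally bounded in $E_w$ (every $\sigma$-continuous function is bounded on the weakly compact set $D\cup\{0\}$), hence by hypothesis functionally bounded in $(E,\tau)$, hence $\tau$-relatively compact since $E$ is a countably $\mu$-space; a relatively compact sequence with a unique weak cluster point $\tau$-converges to it, giving the Schur property. Finally, for (v), suppose every $\sigma(E,E')$-functionally bounded set has $\tau$-compact closure in $E$. Then $E_w$ is automatically a $\mu$-space, since a functionally bounded set in $E_w$ has $\tau$-compact, hence $\sigma$-compact, closure. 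To see $E$ weakly respects every $\mathcal{P}\in\Pp$: any $\mathcal{P}$-subspace $K$ of $E_w$ is functionally bounded in $E_w$ (all the properties in $\Pp$ imply functional boundedness), so $\overline{K}^{\tau}$ is $\tau$-compact; thus $K$ is a subspace of a $\tau$-compact set on which $\tau$ and $\sigma$ agree (a compact set carrying a coarser Hausdorff topology), and on such a set the properties $\mathcal{S},\mathcal{C},\mathcal{SC},\mathcal{CC},\mathcal{PC},\mathcal{FB}$ are topologically intrinsic and therefore pass from $(K,\sigma)$ to $(K,\tau)$; combined with Proposition \ref{pSubSchur-LCS}-type continuity of the identity in the other direction, $\mathcal{P}(E_w)=\mathcal{P}(E)$.

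I expect the main obstacle to be item (v), specifically the clean justification that on the $\tau$-compact set $\overline{K}^{\tau}$ the weak and original topologies coincide, so that the class memberships transfer; once that is in hand everything else reduces to the Eberlein–Šmulyan-style folklore plus the continuity of $(E,\tau)\to E_w$. The other mild care-point is (iv), where one must verify that "functionally bounded in $E_w$" really follows from weak relative compactness — which it does, but deserves the one-line remark that a continuous image of a compact set is bounded.
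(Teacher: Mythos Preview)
Your argument is correct and tracks the paper's proof closely; the overall architecture (convergent sequence with limit as test object, transfer via the homeomorphism on a $\tau$-compact closure in (v), and the countably-$\mu$-space step in (iv)) is the same. Two points deserve comment.

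First, your treatment of (iii) diverges from the paper and is under-justified as written. You propose to show directly that $D\cup\{x\}$ fails to be $\tau$-pseudocompact by producing a locally finite family of separating open sets, but you do not explain why the $x_n$ cannot $\tau$-accumulate on one another. The missing observation is that any $\tau$-cluster point of $\{x_n\}$ in $D\cup\{x\}$ would also be a weak cluster point and hence equal $x$, which you have already isolated; this forces $D\cup\{x\}$ to be $\tau$-discrete, and then your unbounded function exists. The paper avoids this detour entirely: since $D\cup\{x\}$ is countable it is normal, and a normal pseudocompact space is countably compact, so (iii) reduces immediately to (ii). That reduction is both shorter and more robust.

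Second, your exposition of (i) is organized confusingly: the sentence ``the forward direction is immediate since the identity $(E,\tau)\to E_w$ is continuous'' does not prove either implication of the biconditional, only the trivial inclusion $\mathcal{SC}(E)\subseteq\mathcal{SC}(E_w)$ that holds in every lcs. You do supply both implications later (``for the converse'' and ``the reverse implication''), so the mathematics is complete, but the labeling should be cleaned up. The paper handles the ``Schur $\Rightarrow$ weakly respects $\mathcal{SC}$'' direction exactly as you sketch in your final sentence of that paragraph.
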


\begin{proof}
(i) Assume that $(E,\tau)$ has the Schur property and let $A$ be a sequentially compact subset of $E_w$. Take a sequence $S=\{ a_n\}_{n\in\NN}$ in $A$. Then $S$ has a weakly convergent subsequence $S'$. By the Schur property $S'$ converges in $\tau$. Hence $A$ is $\tau$-sequentially compact. Thus $E$ weakly respects sequential compactness.

Conversely, assume that $(E,\tau)$ weakly respects sequential compactness  and let $\{ a_n\}_{n\in\NN}$ be a sequence weakly converging to an element $a_0\in E$. Set $S:= \{ a_n\}_{n\in\NN} \cup\{ a_0\}$, so $S$ is weakly compact. Being countable $S$ is metrizable and hence weakly sequentially compact. So $S$ is sequentially compact in $\tau$. We show that $a_n\to a_0$ in $\tau$. Suppose for a contradiction that there is a $\tau$-neighborhood $U$ of $a_0$ which does not contain an infinite subsequence $S'$ of $S$. Then there is a subsequence   $\{ a_{n_k}\}_{k\in\NN}$ of $S'$ which $\tau$-converges to an element $g\in S$. Clearly, $g\not= a_0$ and $a_{n_k}\to g$ in the weak topology, and hence $a_n\not\to a_0$ in $\sigma(E,E')$, a contradiction. Therefore $a_n\to a_0$ in $\tau$. Thus $E$  has the Schur property.

(ii),(iii) Let $\{ a_n\}_{n\in\NN}$ be a sequence weakly converging to an element $a_0\in E$. Set $S:= \{ a_n\}_{n\in\NN} \cup\{ a_0\}$, so $S$ is weakly compact. Hence $S$ is $\sigma(E,E')$-countably compact. So $S$ is countably compact or pseudocompact in $\tau$, respectively. As any countable space is normal, in both cases $S$ is countably compact in $\tau$. We show that $a_n\to a_0$ in $\tau$. Suppose for a contradiction that there is a $\tau$-neighborhood $U$ of $a_0$ which does not contain an infinite subsequence $S'$ of $S$. Then $S'$ has a $\tau$-cluster point $g\in S$ and clearly $g\not= a_0$. Note that $g$ is also a cluster point of $S'$ in the weak topology. Hence $g=a_0$, a contradiction. Therefore $a_n\to a_0$ in $\tau$. Thus $E$  has the Schur property.

(iv) Let $S=\{ a_n: n\in\NN\} \cup \{ a_0\}$ be a sequence in $E$ which weakly converges to $a_0$. Since $S$ is also functionally bounded in $E_w$, we obtain that $S$ is closed and functionally bounded in $E$.  So $S$ is compact in $E$ because $E$ is a countably $\mu$-space. As the identity map $(S,\tau|_S)\to \big(S,\sigma(E,E')|_S\big)$ is a homeomorphism, $a_n\to a_0$ in $E$. Thus $E$ has the Schur property.

(v) Let $A\in\mathcal{P}(E_w)$. Then $A$ is functionally bounded in $E_w$. Therefore its $\tau$-closure $\overline{A}$ is compact in $E$, so  the identity map $id: \big(\overline{A}, \tau|_{\overline{A}}\big) \to \big(\overline{A}, \sigma(E,E')|_{\overline{A}}\big)$ is a homeomorphism. Hence $E_w$ is a $\mu$-space and $A\in \mathcal{P}(E)$.   Thus $E$ weakly respects $\mathcal{P}$.  \qed
\end{proof}

Recall that a locally convex space $E$ is called {\em semi-Montel} if every bounded subset of $E$ is relatively compact, and $E$ is a {\em Montel space} if it is a barrelled semi-Montel space.
\begin{proposition} \label{p:Schur-Montel-respected}
A semi-Montel space $E$ weakly respects all properties $\mathcal{P}\in\Pp$.
\end{proposition}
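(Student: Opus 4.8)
The plan is to reduce the whole statement to part (v) of Proposition \ref{p:Schur=sequential-comp-LCS}, whose hypothesis is that \emph{every functionally bounded subset of $E_w$ has compact closure in $E$}. Once this hypothesis is verified for a semi-Montel space, that proposition delivers at once that $E$ weakly respects every $\mathcal{P}\in\Pp$ (and, as a bonus, that $E_w$ is a $\mu$-space), so there is essentially nothing more to do.

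So first I would check the hypothesis. Let $A$ be a functionally bounded subset of $E_w$. Since every continuous seminorm on an lcs is in particular a continuous real-valued function, a functionally bounded set is bounded; hence $A$ is bounded in $E_w$. As recalled in the introduction, $E$ and $E_w$ have the same bounded sets, so $A$ is bounded in $E$. Now the semi-Montel hypothesis enters: every bounded subset of $E$ is relatively compact, so the $\tau$-closure $\overline{A}$ of $A$ is compact in $E$. This is exactly what Proposition \ref{p:Schur=sequential-comp-LCS}(v) requires, and applying it finishes the proof.

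There is no genuine obstacle here; the only points deserving a word of care are the two standard facts used above (functional boundedness implies boundedness in an lcs, and $E$, $E_w$ share the same bounded sets), together with the remark — needed implicitly inside the proof of (v) — that each property in $\Pp$ entails functional boundedness, so that a member of $\mathcal{P}(E_w)$ is indeed functionally bounded in $E_w$. For a convergent sequence one uses that the sequence together with its limit is compact; for compactness, sequential or countable compactness and pseudocompactness, the image of the set under any continuous real-valued function is bounded by the corresponding compactness property. All of this is routine, and the substance of the proposition is carried entirely by the semi-Montel property through Proposition \ref{p:Schur=sequential-comp-LCS}(v).
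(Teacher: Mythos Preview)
Your proposal is correct and follows essentially the same route as the paper: verify that every functionally bounded subset of $E_w$ is bounded in $E$ and hence relatively compact by the semi-Montel property, then invoke Proposition~\ref{p:Schur=sequential-comp-LCS}(v). The paper's own proof is a one-line version of exactly this argument.
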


\begin{proof}
Let $A\in \mathcal{P}(E_w)$. Then $A$ is a functionally bounded subset of $E_w$ and hence $A$ is bounded in $E$. Therefore the closure $\overline{A}$ of $A$ in $E$ is compact and (v) of Proposition \ref{p:Schur=sequential-comp-LCS} applies.  \qed
\end{proof}

Recall that a topological group $G$ is called {\em sequentially complete} if every Cauchy sequence has a limit point. A locally convex space $E$ is called {\em weakly sequentially complete} if the space $E_w$ is sequentially complete. The following assertion is well known for Banach spaces, we give its proof only for the sake of completeness and the reader convenience.
\begin{proposition} \label{p:weak-sequential-complete-LCS}
Let $(E,\tau)$ be an lcs  with the Schur property. Then $E$ is sequentially complete if and only if it   is weakly sequentially complete.
\end{proposition}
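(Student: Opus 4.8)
The plan is to reduce the equivalence to a single observation: for an lcs $(E,\tau)$ with the Schur property, \emph{every weakly Cauchy sequence in $E$ is $\tau$-Cauchy}. Granting this, the implication ``sequentially complete $\Rightarrow$ weakly sequentially complete'' is immediate: given a weakly Cauchy sequence $\{x_n\}_{n\in\NN}$ in $E$, it is $\tau$-Cauchy by the observation, hence by $\tau$-sequential completeness it has a $\tau$-limit point $x_0$; a Cauchy sequence with a cluster point converges to it, so $x_n\to x_0$ in $\tau$ and a fortiori in $\sigma(E,E')$. Thus $E_w$ is sequentially complete. For the converse implication we do not even need the full observation: if $\{x_n\}_{n\in\NN}$ is $\tau$-Cauchy it is weakly Cauchy, so by weak sequential completeness it has a weak cluster point $x_0$ and therefore converges to $x_0$ in $\sigma(E,E')$; the Schur property then upgrades this to $x_n\to x_0$ in $\tau$, so $E$ is sequentially complete.

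To prove the observation, first record the elementary fact that if $\{x_n\}_{n\in\NN}$ is weakly Cauchy and $\{x_{n_k}\}_{k\in\NN}$, $\{x_{m_k}\}_{k\in\NN}$ are subsequences with $n_k\to\infty$ and $m_k\to\infty$, then $x_{n_k}-x_{m_k}\to 0$ in $\sigma(E,E')$: for each $f\in E'$ the scalar sequence $(f(x_n))_{n\in\NN}$ is Cauchy, hence convergent, and so $f(x_{n_k})-f(x_{m_k})\to 0$. By the Schur property it follows that $x_{n_k}-x_{m_k}\to 0$ in $\tau$. Now suppose, towards a contradiction, that a weakly Cauchy sequence $\{x_n\}_{n\in\NN}$ fails to be $\tau$-Cauchy. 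Then there is a neighborhood $U$ of $0$ in $(E,\tau)$ such that for every $N$ one can find $n,m\geq N$ with $x_n-x_m\notin U$; carrying out the routine inductive choice along increasing thresholds produces subsequences $\{x_{n_k}\}_{k\in\NN}$ and $\{x_{m_k}\}_{k\in\NN}$ with $n_k,m_k\to\infty$ and $x_{n_k}-x_{m_k}\notin U$ for all $k$, contradicting the convergence $x_{n_k}-x_{m_k}\to 0$ in $\tau$ just established.

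The only point requiring any care is this last extraction of the two subsequences from the negation of the Cauchy condition, together with keeping clear the (automatic, in a uniform space) passage from ``Cauchy sequence with a cluster point'' to ``convergent sequence''; everything else is a direct invocation of the Schur property. Note also that this argument makes transparent why the Schur hypothesis is essential in both directions: without it a weakly Cauchy sequence need not be $\tau$-Cauchy, and a $\tau$-Cauchy sequence converging weakly to a point of $E$ need not converge to it in $\tau$.
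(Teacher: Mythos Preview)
Your proof is correct and follows essentially the same approach as the paper: both arguments hinge on the observation that, under the Schur property, a weakly Cauchy sequence is $\tau$-Cauchy, proved via the fact that differences $x_{n_k}-x_{m_k}$ of subsequences are weakly null and hence $\tau$-null. The paper states this using the characterization ``$\{g_n\}$ is Cauchy iff $g_{m_i}-g_{k_i}\to 0$ for all strictly increasing $\{m_i\},\{k_i\}$'' in one line, while you unpack it as a short contradiction argument; the converse direction is identical in both.
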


\begin{proof}
Assume that $E$ is sequentially complete.  Let $\{ g_n\}_{n\in\NN}$ be a Cauchy sequence in $E_w$. Then for any two strictly increasing sequences $\{ m_i\}_{i\in\NN}$ and $\{ k_i\}_{i\in\NN} $ in $\NN$, the sequence $\{ g_{m_i} -g_{k_i} \}_{i\in\NN}$  weakly converges to $0\in E$. By the Schur property $\{ g_{m_i} -g_{k_i} \}_{i\in\NN}$ converges to zero in $\tau$. So $\{ g_n\}_{n\in\NN}$ is a Cauchy sequence in $E$ and hence it converges to some $g\in E$. Therefore $g_n\to g$ in $\sigma(E,E')$. Thus $E$ is weakly sequentially complete.

Conversely, let $E$ be weakly sequentially complete.  If $\{ g_n\}_{n\in\NN}$ is a Cauchy sequence in $E$, then it is a Cauchy sequence in $\sigma(E,E')$ and hence weakly converges to some $g\in E$. By the Schur property,  $g_n\to g$ in the original topology $\tau$. Thus $E$ is sequentially complete.  \qed
\end{proof}

For a nonzero $x=(x_n)\in\ell_\infty$, we denote by $\supp(x):=\{ n\in\NN: x_n\not= 0\}$ the {\em support} of $x$. For every $n\in\NN$, set $e_n :=(0,\dots,0,1,0,\dots)$, where $1$ is placed in position $n$. We shall consider the sequence $\{ e_n: n\in\NN\}$ also as the {\em (standard) unit basis}  of $\ell_1$.  The span of a subset $A$ of an lcs $E$ is denoted by $\spn(A)$.

We shall say that a sequence $A=\{ a_n\}_{n\in\NN}$ of an lcs $E$ is {\em equivalent to the unit basis $\{ e_n: n\in\NN\}$ of $\ell_1$} if there exists a linear topological isomorphism $R$ from the closure of $\spn(A)$ onto a subspace of $\ell_1$ such that $R(a_n)=e_n$ for every $n\in\NN$ (we do not assume that the closure of $\spn(A)$ is complete).

\begin{lemma} \label{l:l1-sequence}
Let $A=\{ a_n\}_{n\in\NN}$ be a sequence in an lcs $E$ which is equivalent to the unit basis of $\ell_1$. Then $A$ is not weakly functionally bounded.
\end{lemma}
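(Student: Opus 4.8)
The plan is to transport everything, via the defining isomorphism, to the unit basis $\{e_n\}$ of $\ell_1$, and there to combine the Schur property of $\ell_1$ with Valdivia's theorem (quoted in the Introduction) that $(\ell_1,\sigma(\ell_1,\ell_\infty))$ is a $\mu$-space. I fix a linear topological isomorphism $R$ of $H:=\overline{\spn(A)}$ onto a subspace $F$ of $\ell_1$ with $R(a_n)=e_n$ for all $n$, and, applying the Hahn--Banach theorem to the coordinate functionals of $R$, I obtain $\widetilde\varphi_n\in E'$ with $\widetilde\varphi_n(a_m)=\delta_{nm}$; in particular $A$ is discrete in $E_w$, each $a_m$ being separated from the rest of $A$ by $\widetilde\varphi_m$. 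Since a linear topological isomorphism onto its range transports the weak topology (the dual of $H$ is $\{f|_H:f\in E'\}$, which $R$ carries onto $\{g|_F:g\in\ell_\infty\}$), $R$ is simultaneously a homeomorphism of $H$ with the topology $\sigma(E,E')|_H$ onto $F$ with $\sigma(\ell_1,\ell_\infty)|_F$; and $H$, being closed and convex in $E$, is weakly closed.

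Next I show $A$ is also \emph{closed} in $E_w$. A short computation shows that $\{e_n\}$ has no cluster point in $(\ell_1,\sigma(\ell_1,\ell_\infty))$: testing a putative cluster point against the coordinate functionals of $\ell_1$ forces all of its coordinates to vanish, while testing against $(1,1,\dots)\in\ell_\infty$ forces the sum of its coordinates to be $1$. Transporting this along $R$ and using that $H$ is weakly closed, $A$ has no cluster point in $E_w$; hence $A$ is an infinite, closed, discrete subset of $E_w$, and in particular is not relatively weakly compact. Consequently, whenever $E_w$ is a $\mu$-space --- e.g. when $E$ is quasi-complete, by Valdivia's theorem --- we are done: in a $\mu$-space every functionally bounded set has compact closure, whereas the closure of $A$ in $E_w$, namely $A$ itself, is infinite and discrete.

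To handle a general locally convex $E$ one routes the argument through $H$, whose weak topology is governed by a genuine $\mu$-space. Indeed $\ell_1$ is complete, so $(\ell_1)_w:=(\ell_1,\sigma(\ell_1,\ell_\infty))$ is a $\mu$-space by Valdivia's theorem, and in it the set $\{e_n\}$, being closed and not relatively weakly compact, is not functionally bounded; equivalently, were $\{e_n\}$ functionally bounded in $(\ell_1)_w$ it would be relatively weakly compact, hence, by the Eberlein--\v{S}mulyan theorem and the Schur property of $\ell_1$, would contain a norm-convergent subsequence, which is impossible since $\|e_i-e_j\|_1=2$ for $i\ne j$. Restricting from $\ell_1$ to $F$ and transporting back along $R$, it follows that $A$ is not functionally bounded in $H$ with the relative weak topology $\sigma(E,E')|_H$. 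The remaining step --- deducing from this that $A$ is not functionally bounded in $E_w$, i.e. extending the relevant weakly continuous function from the weakly closed subspace $H$ to all of $E_w$, equivalently that $H$ is $C$-embedded in $E_w$ --- is the point I expect to be the main obstacle.
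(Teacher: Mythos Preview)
Your argument is correct and complete in the special case that $E_w$ is a $\mu$-space, but for general $E$ you have correctly identified an unresolved gap: you need $H=\overline{\spn(A)}$ to be $C$-embedded in $E_w$, and this is not available in general. A closed linear subspace of a locally convex space need not be $C$-embedded for the weak topology, so the unbounded weakly continuous function you have on $H$ cannot, without further input, be extended to $E_w$. As written, the proof is therefore incomplete.

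The paper resolves exactly this difficulty, and the trick is worth noting: instead of extending a \emph{scalar} function (which would require $C$-embedding), one extends a \emph{linear operator} into an injective Banach space. Concretely, fix an isometric embedding $T:\ell_1\hookrightarrow\ell_\infty$; then $T\circ R:H\to\ell_\infty$ is continuous and linear, and by the injectivity of $\ell_\infty$ it extends to a continuous linear operator $Q:E\to\ell_\infty$. Any such $Q$ is automatically weak--weak continuous, so it suffices to show that $\{T(e_n)\}$ is not functionally bounded in $(\ell_\infty)_w$. Here Valdivia's theorem applies to the quasi-complete space $\ell_\infty$: if $\{T(e_n)\}$ were functionally bounded in $(\ell_\infty)_w$, its weak closure would be weakly compact, hence (since $T(\ell_1)$ is weakly closed in $\ell_\infty$) weakly compact in $\ell_1$, hence norm-compact by Schur --- contradicting $\|e_i-e_j\|_1=2$. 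Thus the obstacle you flagged is not overcome by proving $C$-embedding of $H$, but by replacing the nonlinear extension problem with a linear one that the injectivity of $\ell_\infty$ handles for free.
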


\begin{proof}
It is well known that there exists an isometric isomorphism $T$ from $\ell_1$ into $\ell_\infty$. We claim that the sequence $S:=\{ T(e_n): n\in\NN\}$ is not weakly functionally bounded in $\ell_\infty$. Indeed, suppose for a contradiction that $S$ is weakly functionally bounded in $\ell_\infty$. Since $(\ell_\infty)_w$ is a $\mu$-space by \cite{Valdivia-77}, the weak closure  $K$ of $S$ is a weakly compact subset of $\ell_\infty$. As $(\ell_1)_w$ is a closed subspace of $(\ell_\infty)_w$, we obtain that $K$ is compact in $(\ell_1)_w$. Therefore $K$ is norm compact by the Schur property of $\ell_1$, a contradiction. Thus $S$ is not weakly functionally bounded in $\ell_\infty$.

Now let $M$ be the closure of $\spn(A)$ in $ E$ and let $R$ be a linear topological isomorphism of $M$ onto a subspace of $\ell_1$ such that $R(a_n)=e_n$ for every $n\in\NN$. The injectivity of $\ell_\infty$ (see Theorem 10.1.2 of \cite{NaB}) implies that the operator $T\circ R$ can be extended to a continuous operator $Q$ from $E$ to $\ell_\infty$. Since $Q$ is also weakly continuous, the claim implies that $A$ is not weakly functionally bounded in $E$.  \qed
\end{proof}

Following \cite{GKKLP}, a locally convex space $E$ is said to have the {\em Rosenthal property} if every bounded sequence in $E$ has a subsequence which either (1) is Cauchy in the weak topology, or (2) is equivalent to the unit basis of $\ell_1$.

\begin{proposition} \label{p:Rosenthal-Schur}
Let $(E,\tau)$ be a  sequentially complete lcs with the Rosenthal property. If $E$ has the Schur property, then every functionally bounded subset $A$ of $E_w$ is relatively sequentially compact in $E$.
\end{proposition}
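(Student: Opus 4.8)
The plan is to take an arbitrary sequence in $A$, extract a subsequence via the Rosenthal property, rule out the $\ell_1$ alternative using Lemma \ref{l:l1-sequence}, and then upgrade weak Cauchyness to $\tau$-convergence by combining the Schur property with sequential completeness.

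First I would fix a sequence $\{ a_n\}_{n\in\NN}$ in $A$. Since $A$ is functionally bounded in $E_w$, every $\chi\in E'$ (viewed, via its real and imaginary parts, as a continuous real-valued function on $E_w$) is bounded on $A$; hence $A$ is weakly bounded, and therefore bounded in $(E,\tau)$ by the Mackey theorem. So $\{ a_n\}_{n\in\NN}$ is a bounded sequence in $E$, and the Rosenthal property yields a subsequence $\{ a_{n_k}\}_{k\in\NN}$ which is either (1) Cauchy in $\sigma(E,E')$, or (2) equivalent to the unit basis of $\ell_1$. To exclude (2), observe that a subset of a functionally bounded set is functionally bounded, so $\{ a_{n_k}\}_{k\in\NN}$ would be weakly functionally bounded in $E$; but by Lemma \ref{l:l1-sequence} a sequence equivalent to the unit basis of $\ell_1$ is never weakly functionally bounded, a contradiction. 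Hence alternative (1) holds, i.e. $\{ a_{n_k}\}_{k\in\NN}$ is weakly Cauchy.

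Finally I would show that $\{ a_{n_k}\}_{k\in\NN}$ $\tau$-converges to a point of $E$, exactly as in the proof of Proposition \ref{p:weak-sequential-complete-LCS}: for any two strictly increasing sequences $\{ p_i\}_{i\in\NN}$ and $\{ q_i\}_{i\in\NN}$ in $\NN$, the sequence $\{ a_{n_{p_i}}-a_{n_{q_i}}\}_{i\in\NN}$ weakly converges to $0\in E$, so by the Schur property it converges to $0$ in $\tau$; it follows that $\{ a_{n_k}\}_{k\in\NN}$ is $\tau$-Cauchy, and, $E$ being sequentially complete, it $\tau$-converges to some $g\in E$ (which then lies in the $\tau$-closure of $A$). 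Since $\{ a_n\}_{n\in\NN}$ was arbitrary, $A$ is relatively sequentially compact in $E$. The one step carrying real content is the passage from weak Cauchyness to $\tau$-Cauchyness, where the interleaving-subsequences argument forces us to use the Schur property in its full strength; the reduction to a bounded sequence, the elimination of the $\ell_1$ case, and the final appeal to sequential completeness are all routine.
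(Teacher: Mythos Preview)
Your proof is correct and follows essentially the same route as the paper: apply the Rosenthal property to a bounded sequence in $A$, rule out the $\ell_1$ alternative via Lemma~\ref{l:l1-sequence}, and upgrade the remaining weakly Cauchy subsequence to a $\tau$-convergent one using Schur plus sequential completeness. The only cosmetic difference is that the paper invokes Proposition~\ref{p:weak-sequential-complete-LCS} as a black box (weak sequential completeness, then Schur again), whereas you unpack that argument inline.
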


\begin{proof}
We have to show that every sequence $\{ a_n\}_{n\in\NN} \subseteq A$ has a convergent subsequence in $E$. 
Clearly, $A$ is a bounded subset of $E$. Hence, by the  Rosenthal property, $\{ a_n\}_{n\in\NN}$ has a subsequence $\{ b_n\}_{n\in\NN}$ which either (1) is Cauchy in the weak topology $\tau_w$, or (2) is equivalent to the unit basis of $\ell_1$. The case (2) is not fulfilled by Lemma \ref{l:l1-sequence}, so $\{ b_n\}_{n\in\NN}$  is weakly Cauchy. Then, by Proposition \ref{p:weak-sequential-complete-LCS}, $b_n$  weakly converges to a point $b\in E$, and hence,  by the Schur property, $b_n\to b$ in the original topology $\tau$. Thus $A$ is relatively sequentially compact in $E$.  \qed
\end{proof}

Taking into account that every Banach space is sequentially complete 
and being motivated by the Rosenthal property and  the Eberlein--\v{S}mulyan theorem, we denote by $\mathbf{RES}$ the class of all sequentially complete locally convex spaces $E$ with the Rosenthal property such that the closure $\overline{A}$ of every relatively  sequentially compact subset $A$ of $E$ is compact.

\begin{theorem} \label{t:Rosenthal-Ruess-Schur-basic}
If $(E,\tau)\in \mathbf{RES}$, then the following assertions are equivalent:
\begin{enumerate}
\item[{\rm (i)}] $E$ has the Schur property;
\item[{\rm (ii)}] there is $\mathcal{P}\in \Pp_0$ such that $E$ weakly respects $\mathcal{P}$;
\item[{\rm (iii)}] $E$ weakly respects $\mathcal{FB}$ and is a countably $\mu$-space;
\item[{\rm (iv)}] $E$ weakly respects all  $\mathcal{P}\in \Pp$ and is a countably $\mu$-space;
\item[{\rm (v)}] $E$ is a countably $\mu$-space and every bounded non-functionally bounded sequence in $E$ has a subsequence which is equivalent to the unit  basis of $\ell_1$.
\end{enumerate}
In these cases every weakly functionally bounded subset of $E$ is relatively compact in $E$ and the space $E_w$ is a $\mu$-space.
\end{theorem}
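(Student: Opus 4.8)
The plan is to establish the cycle of implications
\[
\mathrm{(i)}\Rightarrow\mathrm{(iv)}\Rightarrow\mathrm{(iii)}\Rightarrow\mathrm{(i)},\qquad
\mathrm{(iv)}\Rightarrow\mathrm{(ii)}\Rightarrow\mathrm{(i)},\qquad
\mathrm{(i)}\Rightarrow\mathrm{(v)}\Rightarrow\mathrm{(i)},
\]
with the final assertion falling out of the first implication. \emph{Step 1: (i) implies the final assertion and (iv).} Assume $E$ has the Schur property. As $E\in\mathbf{RES}$ is sequentially complete and has the Rosenthal property, Proposition \ref{p:Rosenthal-Schur} gives that every functionally bounded subset $A$ of $E_w$ is relatively sequentially compact in $E$; by the very definition of the class $\mathbf{RES}$ its closure $\overline{A}$ in $E$ is then compact. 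Hence every weakly functionally bounded subset of $E$ is relatively compact in $E$, which is precisely the hypothesis of Proposition \ref{p:Schur=sequential-comp-LCS}(v); that proposition yields at once that $E_w$ is a $\mu$-space and that $E$ weakly respects every $\mathcal{P}\in\Pp$. Moreover a set functionally bounded in $E$ is functionally bounded in $E_w$ (every continuous real-valued function on $E_w$ is $\tau$-continuous), hence relatively compact in $E$ by what we just proved, so $E$ is itself a $\mu$-space and in particular a countably $\mu$-space. This establishes (iv) and the last sentence of the theorem.

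\emph{Step 2: the routine implications.} The implications $\mathrm{(iv)}\Rightarrow\mathrm{(iii)}$ and $\mathrm{(iv)}\Rightarrow\mathrm{(ii)}$ are immediate, since $\mathcal{FB}\in\Pp$ and $\Pp_0\subseteq\Pp$. For $\mathrm{(iii)}\Rightarrow\mathrm{(i)}$ apply Proposition \ref{p:Schur=sequential-comp-LCS}(iv). For $\mathrm{(ii)}\Rightarrow\mathrm{(i)}$, suppose $E$ weakly respects some $\mathcal{P}\in\Pp_0$. If $\mathcal{P}=\mathcal{S}$ this is the Schur property by definition; if $\mathcal{P}\in\{\mathcal{SC},\mathcal{CC},\mathcal{PC}\}$ use parts (i)--(iii) of Proposition \ref{p:Schur=sequential-comp-LCS}; and if $\mathcal{P}=\mathcal{C}$, observe that for any sequence $a_n\to a_0$ in $E_w$ the countable set $S=\{a_n:n\in\NN\}\cup\{a_0\}$ is compact in $E_w$, hence compact in $E$, so the continuous bijection $(S,\tau|_S)\to(S,\sigma(E,E')|_S)$ is a homeomorphism and therefore $a_n\to a_0$ in $\tau$.

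\emph{Step 3: (i) $\Leftrightarrow$ (v).} For $\mathrm{(v)}\Rightarrow\mathrm{(i)}$ it is enough, by Proposition \ref{p:Schur=sequential-comp-LCS}(iv), to check that a space satisfying (v) weakly respects $\mathcal{FB}$; the inclusion $\mathcal{FB}(E)\subseteq\mathcal{FB}(E_w)$ is automatic, and if some $A\in\mathcal{FB}(E_w)$ were not functionally bounded in $E$, we could pick a continuous real-valued function $f$ on $E$ unbounded on $A$ and a sequence $\{a_n\}\subseteq A$ with $|f(a_n)|\to\infty$; since $A$ is bounded in $E$ this is a bounded, non-functionally-bounded sequence, so by (v) some subsequence $\{b_n\}$ is equivalent to the unit basis of $\ell_1$, whence by Lemma \ref{l:l1-sequence} the set $\{b_n:n\in\NN\}\subseteq A$ is not weakly functionally bounded, a contradiction. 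Conversely, assume (i); then $E$ is a countably $\mu$-space by Step 1 and weakly sequentially complete by Proposition \ref{p:weak-sequential-complete-LCS}. Let $\{a_n\}$ be a bounded sequence whose range is not functionally bounded in $E$; fixing a continuous real-valued function $f$ on $E$ with $\sup_n|f(a_n)|=\infty$ and passing to a subsequence with $|f(a_n)|\to\infty$, we may assume that the range of every subsequence of $\{a_n\}$ is non-functionally bounded. By the Rosenthal property a subsequence $\{b_n\}$ is either weakly Cauchy or equivalent to the unit basis of $\ell_1$. In the first case weak sequential completeness provides a weak limit $b\in E$ and the Schur property gives $b_n\to b$ in $\tau$, so $\{b_n:n\in\NN\}\cup\{b\}$ is $\tau$-compact and hence functionally bounded in $E$, contradicting the non-functional-boundedness of the range of $\{b_n\}$. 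Thus $\{b_n\}$ is equivalent to the unit basis of $\ell_1$, which proves (v).

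\emph{Where the difficulty lies.} The single substantive step is Step 1: this is where membership in $\mathbf{RES}$ is used to the hilt --- Rosenthal's $\ell_1$-dichotomy, sequential completeness, and the promotion of relative sequential compactness to relative compactness, all bundled into Proposition \ref{p:Rosenthal-Schur} --- together with the recurring use of Lemma \ref{l:l1-sequence} to exclude an $\ell_1$-basic subsequence inside a weakly functionally bounded set. Everything else is bookkeeping with the two elementary facts that the weak topology is coarser than $\tau$ and that a continuous bijection of a compact space onto a Hausdorff space is a homeomorphism.
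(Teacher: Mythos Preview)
Your proof is correct and follows essentially the same route as the paper's: the core step (i)$\Rightarrow$(iv) via Proposition~\ref{p:Rosenthal-Schur} and the $\mathbf{RES}$ hypothesis is identical, and the remaining implications are handled by the same auxiliary results (Proposition~\ref{p:Schur=sequential-comp-LCS}, Lemma~\ref{l:l1-sequence}, Proposition~\ref{p:weak-sequential-complete-LCS}). The only cosmetic differences are that for (v)$\Rightarrow$(i) you first verify that $E$ weakly respects $\mathcal{FB}$ and then invoke Proposition~\ref{p:Schur=sequential-comp-LCS}(iv), whereas the paper argues directly on a weakly convergent sequence; and for (i)$\Rightarrow$(v) you take the unbounded function $f$ on $E$ rather than on $E_w$ and use the Schur property to force $\tau$-convergence of a weakly Cauchy subsequence, whereas the paper takes $f$ on $E_w$ and reaches the contradiction already at the weak level---both variants work.
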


\begin{proof}
(i)$\Rightarrow$(iv) Let $A\in\mathcal{P}(E_w)$ for some $\mathcal{P}\in\Pp$. Then $A$ is a functionally bounded subset of $E_w$. By Proposition \ref{p:Rosenthal-Schur}, $A$ is relatively sequentially compact in $E$. As $E\in \mathbf{RES}$ we obtain that the $\tau$-closure $\overline{A}$ of $A$ is $\tau$-compact, i.e., $A$ is relatively compact in $E$.  Therefore the space $E_w$ and hence also $E$ are  $\mu$-spaces, and $A\in \mathcal{FB}(E)$. Thus $E$ weakly respects functional boundedness. If $\mathcal{P}\in\Pp_0$ and $\overline{A}$  is $\tau$-compact,   the identity map $(\overline{A}, \tau|_{\overline{A}}) \to (\overline{A}, \tau_w|_{\overline{A}})$ is a homeomorphism and hence  $A\in\mathcal{P}(E)$. Thus $E$ weakly respects $\mathcal{P}$.

(ii)$\Rightarrow$(i) follows from Proposition  \ref{p:Schur=sequential-comp-LCS}. The implications (i)$\Rightarrow$(ii), (iv)$\Rightarrow$(i) and (iv)$\Rightarrow$(iii) are clear.

(iii)$\Rightarrow$(i) Let $S$ be a convergent sequence in $E_w$ with the limit point. Then $S\in \mathcal{FB}(E_w)$, and hence $S$ is functionally bounded in $E$. As $E$ is a countably $\mu$-space, the closed countable subset $S$ of $E$ is compact. Since the identity map $(S,\tau|_S) \to (S, \tau_w|_S)$ is a homeomorphism, we obtain that $S$ is a convergent sequence in $E$. Thus $E$ has the Schur property.

(i),(iii)$\Rightarrow$(v)   Let $S=\{ g_n\}_{n\in\NN}$ be a  bounded non-functionally bounded sequence in $E$. Observe that $S$ is also not functionally bounded in $E_w$ since $E$ weakly respects functional boundedness. Take a continuous real-valued function $f$ on $E_w$ which is unbounded on $S$ and take a subsequence $S'=\{ g_{n_k}\}_{k\in\NN}$ of $S$ such that $f(g_{n_k})\to\infty$.
We claim that $S'$ does not have a weakly Cauchy subsequence $S''$. Indeed, otherwise, the weak sequential completeness of $E$ (see Proposition \ref{p:weak-sequential-complete-LCS}) would imply that $S''$ converges in $\tau_w$. Hence $f$ is bounded on $S''$ that is impossible since $f(g_{n_k})\to\infty$.
Finally, the Rosenthal property implies that $S$ has a subsequence which is equivalent to the unit  basis of $\ell_1$.

(v)$\Rightarrow$(i) Let $S$ be a weakly convergent sequence in $E$ with the limit point. Then $S$ is bounded. We claim that $S$ is functionally bounded in $E$. Indeed, otherwise, $S$ would contain a subsequence $S'$  which is equivalent to the unit  basis of $\ell_1$. Therefore $S$ is not weakly functionally bounded by Lemma \ref{l:l1-sequence}, a contradiction. Now since $E$ is a countably $\mu$-space, the closed countable subset $S$ of $E$ is compact. As the identity map $(S,\tau|_S) \to (S, \tau_w|_S)$ is a homeomorphism, we obtain that $S$ is a convergent sequence in $E$. Thus $E$ has the Schur property.  \qed
\end{proof}

Being motivated by a result of S.~Hern\'{a}ndez and S.~Macario \cite[Theorem~3.2]{HM}, we prove the following sufficiently general result. Recall that a subset $A$ of a topological space $(X,\tau)$ is called {\em $C$-embedded} ({\em $C^\ast$-embedded}) if every continuous (respectively,  bounded) real-valued function $f$ on $(A,\tau|_A)$ can be extended to a continuous function $\overline{f}$ on $X$.

\begin{theorem} \label{t:Glicksberg-respecting-lcs}
If  an lcs $(E,\tau)$ is such that $E_w$ is a $\mu$-space, then the following assertions are equivalent:
\begin{enumerate}
\item[{\rm (i)}] $E$ weakly respects compactness;
\item[{\rm (ii)}] $E$ weakly respects countable compactness and $E$  is a $\mu$-space;
\item[{\rm (iii)}] $E$ weakly respects pseudocompactness and $E$  is a $\mu$-space;
\item[{\rm (iv)}] $E$  weakly  respects functional boundedness and $E$  is a $\mu$-space;
\item[{\rm (v)}] $E$  is a $\mu$-space and every non-functionally bounded subset $A$ of $E$ has an infinite subset $B$ which is discrete and $C$-embedded in $E_w$.
\end{enumerate}
If (i)-(v) hold, then  every functionally bounded subset in $E_w$ is relatively compact in $E$.
\end{theorem}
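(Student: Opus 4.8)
\emph{Approach.} Write $\tau_w:=\sigma(E,E')$. The plan is to prove (i)$\Rightarrow$(ii),(iii),(iv),(v) together with the final assertion, and then (ii)$\Rightarrow$(i), (iii)$\Rightarrow$(i), (iv)$\Rightarrow$(i) and (v)$\Rightarrow$(iv). I would first record two elementary facts used everywhere: since the identity $(E,\tau)\to(E,\tau_w)$ is continuous, for each $\mathcal P\in\{\mathcal C,\mathcal{CC},\mathcal{PC},\mathcal{FB}\}$ one has $\mathcal P(E)\subseteq\mathcal P(E_w)$ (for $\mathcal{PC}$ invoke \cite[3.10.24]{Eng}), so weakly respecting $\mathcal P$ just means $\mathcal P(E_w)\subseteq\mathcal P(E)$; and (a) if $K$ is $\tau$-compact then $\tau|_K=\tau_w|_K$ (a continuous bijection from a compact space onto a Hausdorff space is a homeomorphism), hence $\tau|_A=\tau_w|_A$ for all $A\subseteq K$, while (b) any compact, countably compact or pseudocompact subspace of a Tychonoff space is functionally bounded in that space.

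\emph{The main lemma.} Next I would isolate the combinatorial heart of the equivalence with (v): \emph{a subset $A$ of a Tychonoff space $X$ is functionally bounded in $X$ if and only if $A$ contains no infinite subset that is discrete and $C$-embedded in $X$.} One direction is immediate — an infinite discrete subspace carries an unbounded continuous function, which a $C$-embedding extends to $X$. For the other, given $f\in C(X)$, $f\ge0$, unbounded on $A$, I would pick $b_n\in A$ with $f(b_{n+1})\ge f(b_n)+2$ and set $B=\{b_n:n\in\NN\}$: the open sets $f^{-1}((f(b_n)-1,f(b_n)+1))$ are pairwise disjoint and each isolates a single $b_n$, so $B$ is discrete, and for any $g\colon B\to\RR$ a piecewise-linear $h\colon\RR\to\RR$ with $h(f(b_n))=g(b_n)$ vanishing off $\bigcup_n(f(b_n)-\tfrac12,f(b_n)+\tfrac12)$ makes $h\circ f\in C(X)$ an extension of $g$, so $B$ is $C$-embedded. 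This is the only step with genuine content; all the rest is bookkeeping.

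\emph{(i)$\Rightarrow$ the rest.} Assuming $E$ weakly respects compactness: given $A$ functionally bounded in $E_w$, the $\mu$-space property of $E_w$ makes $K:=\overline A^{\,\tau_w}$ be $\tau_w$-compact, hence $\tau$-compact by (i); as $K$ is $\tau$-closed and $A\subseteq K$, the set $\overline A^{\,\tau}$ is a closed subspace of $K$ and so $\tau$-compact — this is exactly the final assertion. Specializing to functionally bounded subsets of $E$ (which are functionally bounded in $E_w$) shows $E$ is a $\mu$-space, and it also gives $A\in\mathcal{FB}(E)$, i.e.\ $\mathcal{FB}(E_w)\subseteq\mathcal{FB}(E)$; so (iv) holds. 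For (ii) and (iii): if $A$ is countably compact (resp.\ pseudocompact) in $E_w$ then $A$ is functionally bounded in $E_w$, so with $K$ as above fact (a) yields $\tau|_A=\tau_w|_A$, whence $(A,\tau|_A)=(A,\tau_w|_A)$ and $A$ is countably compact (resp.\ pseudocompact) in $E$. For (v): $E$ is a $\mu$-space, and if $A\subseteq E$ is not functionally bounded in $E$ then it is not functionally bounded in $E_w$ (since $\mathcal{FB}(E_w)=\mathcal{FB}(E)$), so the Lemma produces an infinite $B\subseteq A$ that is discrete and $C$-embedded in $E_w$.

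\emph{The converses.} For (ii)$\Rightarrow$(i), (iii)$\Rightarrow$(i), (iv)$\Rightarrow$(i): take $K$ $\tau_w$-compact; then $K$ is a countably compact, pseudocompact and functionally bounded subspace of $E_w$, so the relevant hypothesis places $K$ in $\mathcal{CC}(E)$, $\mathcal{PC}(E)$ or $\mathcal{FB}(E)$; in every case fact (b) gives that $K$ is functionally bounded in $E$, so the $\mu$-space property of $E$ makes $\overline K^{\,\tau}$ $\tau$-compact, and $\overline K^{\,\tau}=K$ because $K$ is $\tau_w$-closed hence $\tau$-closed; thus $K$ is $\tau$-compact, i.e.\ $\mathcal C(E_w)\subseteq\mathcal C(E)$. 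For (v)$\Rightarrow$(iv): $E$ is a $\mu$-space by (v), and if $A\subseteq E$ is not functionally bounded in $E$ then by (v) it contains an infinite discrete $C$-embedded (in $E_w$) set $B$, which by the Lemma is not functionally bounded in $E_w$, hence neither is $A$; this gives $\mathcal{FB}(E_w)\subseteq\mathcal{FB}(E)$, i.e.\ (iv). Chaining these implications proves (i)--(v) equivalent, the final assertion having been obtained inside ``(i)$\Rightarrow$(iv)''. The main obstacle is the lemma on functional boundedness; the topological transfers are routine once one has the ``$\tau$ and $\tau_w$ agree on $\tau$-compacta'' fact and the two $\mu$-space hypotheses.
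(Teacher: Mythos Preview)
Your proof is correct and uses the same ideas as the paper: the paper argues via the cycle (i)$\Rightarrow$(ii)$\Rightarrow$(iii)$\Rightarrow$(iv)$\Rightarrow$(v)$\Rightarrow$(i), constructing the discrete $C$-embedded sequence inside (iv)$\Rightarrow$(v) and using its converse implicitly in the one-line step ``then $K$ must be functionally bounded in $E$'' of (v)$\Rightarrow$(i), whereas you isolate both directions as an explicit lemma and arrange the implications as a star around (i). The organization differs slightly, but the mathematical content --- the $\mu$-space closures, the homeomorphism $\tau|_K=\tau_w|_K$ on $\tau$-compacta, and the unbounded-function construction of $B$ --- is identical.
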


\begin{proof}
(i)$\Rightarrow$(ii) Let $A$ be a  countably compact subset of $E_w$.  As $E_w$ is a $\mu$-space, the $\tau_w$-closure $\overline{A}$ of $A$ is compact in $E_w$. Therefore $\overline{A}$ is compact in $E$, and hence $A$ is relatively compact in $E$. Since the identity map $(\overline{A},\tau|_{\overline{A}})\to (\overline{A},\tau_w|_{\overline{A}})$ is a homeomorphism, we obtain that $A$ is  countably compact in $E$. Thus $E$ respects countable compactness. The same proof shows that  every functionally bounded subset in $E_w$ is relatively compact in $E$, and in particular $E$ is a $\mu$-space.

(ii)$\Rightarrow$(iii) Let $A$ be a pseudocompact subset of $E_w$. Then the closure $K$ of $A$ in $E_w$ is $\tau_w$-compact because $E_w$ is a $\mu$-space. Therefore $K$ is countably compact in $E$. Being closed $K$ also is compact in $E$ since $E$ is a $\mu$-space. As the identity map $(K,\tau|_{K})\to (K,\tau_w|_{K})$ is a homeomorphism, we obtain that $A$ is pseudocompact in $E$. Thus $E$ weakly respects pseudocompactness.

The implication (iii)$\Rightarrow$(iv) is proved analogously to (ii)$\Rightarrow$(iii).

(iv)$\Rightarrow$(v) Let $A$ be a non-functionally bounded  subset of $E$. As $E$ weakly respects functional boundedness it follows that  $A$ is not functionally bounded in $E_w$. Let $f$ be a continuous real-valued function on $E_w$ which is unbounded on $A$. If we take $B$ as a sequence $\{ a_{n}\}_{n\in\NN}$ in $A$ such that $|f(a_{n+1})|> |f(a_{n})| +1$ for all $n\in\NN$, then $B$ is discrete and $C$-embedded in $E_w$.

(v)$\Rightarrow$(i) Let $K$ be a compact subset of $E_w$. Then $K$ must be functionally bounded in $E$. Since $E$ is a $\mu$-space and $K$ is also closed in $E$ we obtain that $K$ is compact in $E$. Thus $E$ weakly respects compactness.  \qed
\end{proof}

We need also the following mild completeness type property. An   lcs  $E$ is said to have the {\em $\CP$-property} if every countable precompact subset of $E$ has compact closure. Clearly, any complete lcs has the $\CP$-property, and each lcs with the $\CP$-property is a countably $\mu$-space.
\begin{lemma} \label{l:func-bounded-precompact}
Let $E$ be a locally convex space. Then:
\begin{enumerate}
\item[{\rm (i)}] every functionally bounded subset $A$ of $E$ is precompact;
\item[{\rm (ii)}] if $E$ has the $\CP$-property, then a separable subset $B$ of $E$ is functionally bounded if and only if $B$ is precompact.
\end{enumerate}
\end{lemma}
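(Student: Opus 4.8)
The plan is to prove (i) directly, by reducing to the normed (hence metrizable) case through the canonical seminorm quotients, and then to deduce (ii) from (i) together with the $\CP$-property; the only point requiring a little care is the closed-discreteness of a separated sequence in the metric argument behind (i), which is what makes the Tietze extension theorem applicable.

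For (i), let $A\subseteq E$ be functionally bounded. For each continuous seminorm $p$ on $E$ let $\phi_p\colon E\to E_p$ be the canonical (continuous, surjective) map onto the associated normed space $E_p=E/p^{-1}(0)$, with $\|\phi_p(x)\|_p=p(x)$. Since every continuous real-valued function on $E_p$ pulls back to one on $E$, the image $\phi_p(A)$ is functionally bounded in $E_p$. Now invoke the elementary fact that a functionally bounded subset of a metric space $(X,d)$ is precompact: if it is not, one extracts inductively an $\varepsilon$-separated sequence $(a_n)$ in the set for some $\varepsilon>0$; such a sequence has no cluster point in $X$, so $\{a_n\}$ is closed and discrete, and by the Tietze extension theorem (metric spaces being normal) the map $a_n\mapsto n$ extends to a continuous real-valued function on $X$ that is unbounded on the set, a contradiction. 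Applied to $\phi_p(A)\subseteq E_p$ this gives that $\phi_p(A)$ is totally bounded. Finally, the sets $\{x\in E:p(x)<1\}$ with $p$ a continuous seminorm form a base of neighborhoods of the origin in $E$; given such a set, pick a finite $\tfrac12$-net of $\phi_p(A)$ with centers $\phi_p(a_1),\dots,\phi_p(a_n)$ in $\phi_p(A)$, and use $\|\phi_p(a)-\phi_p(a_i)\|_p=p(a-a_i)$ to conclude $A\subseteq\bigcup_{i=1}^n\big(a_i+\{x:p(x)<1\}\big)$. Hence $A$ is precompact in $E$.

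For (ii), the implication ``functionally bounded $\Rightarrow$ precompact'' is just (i) (and uses neither separability nor the $\CP$-property). Conversely, let $B$ be separable and precompact, and choose a countable dense subset $D$ of $B$. As a subset of the precompact set $B$, the set $D$ is a countable precompact subset of $E$, so by the $\CP$-property its closure $\overline{D}$ is compact in $E$. Since $D$ is dense in $B$, the closures of $B$ and $D$ in $E$ coincide, so $\overline{B}$ is compact; therefore every continuous real-valued function on $E$ is bounded on $\overline{B}$, and a fortiori on $B$. Thus $B$ is functionally bounded, which completes the proof.

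I expect the genuinely substantive step to be the metric lemma used in (i) — verifying that the $\varepsilon$-separated sequence produced from a non-precompact set is closed and discrete, so that Tietze applies; everything else (passing functional boundedness through $\phi_p$, and reassembling precompactness of $A$ from precompactness of the images $\phi_p(A)$ via the seminorm neighborhood base) is routine.
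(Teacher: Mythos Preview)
Your proof is correct. The overall structure---reducing (i) to the metric case via the seminorm quotients $\phi_p\colon E\to E_p$, invoking a Tietze-based argument there, and then reassembling precompactness from the neighborhood base $\{p<1\}$---works cleanly, and your treatment of (ii) (passing through a countable dense subset to exploit the $\CP$-property) is exactly right and in fact spells out what the paper leaves as a one-line remark.

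The paper's route for (i) is different in presentation but not in spirit: it argues directly in $E$ by citing two external results---one (Theorem~5 of \cite{BGP}) guaranteeing that a non-precompact set contains an infinite \emph{uniformly discrete} subset (i.e., a set $C$ with $c-c'\notin U$ for distinct $c,c'\in C$ and some neighborhood $U$ of $0$), and another (Lemma~2.1 of \cite{Gab-Top-Nul}) asserting that such a set is never functionally bounded. Your argument is essentially an unpacking of these two facts in the normed quotient, where uniform discreteness becomes $\varepsilon$-separation and the ``not functionally bounded'' step becomes the explicit Tietze extension $a_n\mapsto n$. What you gain is self-containment (no black-box citations); what the paper gains is brevity and a statement that lives entirely in $E$ without passing through quotients. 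Both approaches identify the same obstruction.
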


\begin{proof}
(i) If $A$ is not precompact, Theorem 5 of \cite{BGP} implies that $A$ has an infinite uniformly discrete subset $C$, i.e., there is a neighborhood $U$ of zero in $E$ such that $c-c' \not\in U$ for every distinct $c,c'\in C$. Thus,  by Lemma 2.1 of \cite{Gab-Top-Nul}, $C$ and hence also $A$ are not functionally bounded, a contradiction.

(ii) follows from (i) and the $\CP$-property.  \qed
\end{proof}

In \cite[Lemma~3]{Diaz-89} J.C.~D\'{\i}az extends the Rosenthal $\ell_1$ theorem to all Fr\'{e}chet spaces. A much more general result was obtained recently by W.~Ruess in \cite{ruess}. Recall that an lcs $E$ is locally complete if every closed disc in $E$ is a Banach disc; every sequentially complete lcs is locally complete by Corollary 5.1.8 of \cite{bonet}.
\begin{theorem}[Ruess] \label{ruess}
Every locally complete lcs $E$ whose every separable bounded set is metrizable has the Rosenthal property.
\end{theorem}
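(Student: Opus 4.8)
The plan is to avoid the Banach space version of the $\ell_1$-theorem and apply instead the classical $\ell_1$-dichotomy of Rosenthal and Dor for uniformly bounded sequences of continuous functions on a compact space, reading it off the images of the given bounded sequence in the equicontinuous subsets of the dual. Let $(x_n)_{n\in\NN}$ be a bounded sequence in $E$ and let $B$ be the closed absolutely convex hull of $\{x_n:n\in\NN\}$; then $B$ is bounded, separable and hence metrizable by hypothesis, and by local completeness the normed space $E_B$ (the span of $B$ with the Minkowski functional of $B$ as its norm) is a Banach space. Using the metrizability of $B$ I would fix an increasing fundamental sequence $p_1\le p_2\le\cdots$ of continuous seminorms for the topology of $E_B$ and set $U_j:=\{x\in E:p_j(x)<1\}$, so that each polar $U_j^\circ\subseteq E'$ is $\sigma(E',E)$-compact and equicontinuous, $\sup\{|f(y)|:f\in U_j^\circ\}=p_j(y)$ for every $y$, and $M_j:=\sup_n p_j(x_n)<\infty$. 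Two observations then drive the argument: first, the evaluations $\widehat{x}_n\colon f\mapsto f(x_n)$ form a uniformly bounded sequence of $\sigma(E',E)$-continuous functions on the compact space $U_j^\circ$; second, every $f\in E'$ is, on $E_B$, dominated by some $p_j$, hence by Hahn--Banach it agrees on $E_B$ with a scalar multiple of a member of $U_j^\circ$, so a sequence lying in $E_B$ is weakly Cauchy in $E$ exactly when for each $j$ the restrictions $(\widehat{x}_n|_{U_j^\circ})_n$ converge pointwise.

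Next I would apply the Rosenthal--Dor dichotomy, for each $j$, to the sequence $(\widehat{x}_n|_{U_j^\circ})_n$: after passing to a subsequence, either it converges pointwise on $U_j^\circ$, or there are reals $r<s$ such that every pair of disjoint finite index sets $P,N$ is separated by some $f\in U_j^\circ$ with $f(x_{n_k})\le r$ for $k\in P$ and $f(x_{n_k})\ge s$ for $k\in N$. Suppose the second alternative occurs for some $j$ and some subsequence $(x_{n_k})_k$. A standard sign-chasing argument (apply the separation to the index sets of non-negative and of negative coefficients, using two functionals of opposite type) yields $\delta:=(s-r)/2>0$ with $p_j\big(\sum_i a_i x_{n_i}\big)\ge\delta\sum_i|a_i|$ for all finitely supported scalars $(a_i)$, while boundedness gives the automatic bound $p_j\big(\sum_i a_i x_{n_i}\big)\le M_j\sum_i|a_i|$. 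Hence the coordinate map $\sum_i a_i x_{n_i}\mapsto(a_i)$ is uniformly $p_j$-continuous on $\spn\{x_{n_k}:k\in\NN\}$ and therefore extends to a continuous linear map $R$ from the closure of $\spn\{x_{n_k}:k\in\NN\}$ in $E$ into $\ell_1$ with $R(x_{n_k})=e_k$; and since $q\big(\sum_i a_i x_{n_i}\big)\le(\sup_i q(x_{n_i}))\sum_i|a_i|$ for every continuous seminorm $q$ of $E$, the map $R$ is injective with continuous inverse, that is, $(x_{n_k})_k$ is equivalent to the unit basis of $\ell_1$ in $E$. If instead the pointwise-convergent alternative is returned for every $j$, then a diagonalization over $j$ yields a subsequence along which the evaluations converge pointwise on every $U_j^\circ$, which by the second observation means that this subsequence is weakly Cauchy in $E$. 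In either case $(x_n)$ has a subsequence of one of the two required kinds, so $E$ has the Rosenthal property.

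The step I expect to cost the most effort is the bookkeeping in the first paragraph: checking that the topology of the Banach disc $E_B$ is genuinely described by a countable increasing family $(p_j)$ and, above all, that \emph{every} continuous functional on $E$ restricts on $E_B$ to a functional bounded by one of the $p_j$, so that the countably many compact sets $U_j^\circ$ really do detect weak Cauchyness of sequences from $E_B$. This is precisely where the two hypotheses enter: separability together with metrizability of bounded sets supplies the countable fundamental family of seminorms, and local completeness makes $E_B$ a Banach space, so that Rosenthal--Dor is invoked in an honest $C(K)$ setting. Everything else is either classical (the Rosenthal--Dor theorem) or routine (the sign-chasing lower estimate and the extension of $R$ by uniform continuity to the closure). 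The one conceptual point worth isolating is that, because the separating functionals in the $\ell_1$-alternative are extracted \emph{inside} the equicontinuous set $U_j^\circ$, the resulting lower $\ell_1$-estimate is witnessed by a continuous seminorm of $E$; this is exactly what makes the isomorphic copy of $\ell_1$ visible already in the original topology of $E$ and not merely in the Banach-disc norm of $E_B$.
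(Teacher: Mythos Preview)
First, a framing remark: the paper does \emph{not} prove this theorem. It is stated with attribution to Ruess and used as a black box, so there is no in-paper proof to compare your proposal against. What follows is therefore an assessment of your argument on its own merits.

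You yourself identify the crux as the claim that ``every $f\in E'$ is, on $E_B$, dominated by some $p_j$'', and this is exactly where the argument breaks. The hypothesis gives only that the $(p_j)$ metrize the \emph{relative} $E$-topology on the set $B$; it does not follow that each $f\in E'$ satisfies $|f|\le C\,p_j$ on $E_B$ for some $C$ and $j$. A concrete obstruction: take $E=(\ell_2,\sigma(\ell_2,\ell_2))$, which is locally complete and has every bounded set metrizable in the weak topology; let $x_n=e_n$, so that $B=\{x\in\ell_2:\|x\|_1\le 1\}$ and $E_B=\ell_1$. The seminorms $p_j(x)=\max_{k\le j}|x_k|$ are continuous on $E$ and do metrize the weak topology on $B$, and then each $U_j^\circ$ consists of vectors supported on $\{1,\dots,j\}$. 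But $f=(1/n)_{n}\in\ell_2=E'$ satisfies $|f(e_{j+1})|=1/(j+1)>0$ while $p_j(e_{j+1})=0$, so no estimate $|f|\le C\,p_j$ can hold on $E_B$. Hence your Hahn--Banach step fails, and the equivalence ``weakly Cauchy in $E$ $\Longleftrightarrow$ pointwise Cauchy on every $U_j^\circ$'' is not established by your argument. (In this particular example the implication happens to be true for other reasons, but your proof does not supply them.)

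The $\ell_1$-half of your dichotomy is sound: if the Rosenthal--Dor independence alternative occurs on some $U_{j}^\circ$, the lower $\ell_1$-estimate is witnessed by the continuous $E$-seminorm $p_{j}$, so the subsequence is equivalent to the $\ell_1$-basis in the topology of $E$, exactly as you say. The difficulty is genuinely concentrated in the complementary ``no $\ell_1$ on any $U_j^\circ$'' branch. One way to reorganise the argument so that this branch becomes routine is to apply the Banach-space Rosenthal theorem directly in $E_B$: a subsequence that is weakly Cauchy in $E_B$ is automatically weakly Cauchy in $E$, since $E'|_{E_B}\subseteq (E_B)'$. The price is that the other branch now reads ``$\ell_1$ in $E_B$'' and one must upgrade this to ``$\ell_1$ in $E$''; that is where the metrizability of $B$ has to enter, but through a different door than the domination claim you attempted.
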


Below we give some examples of locally convex spaces which belong to the class  $\mathbf{RES}$ and have the $\CP$-property.
An lcs $(E,\tau)$ is called an {\em $(LM)$-space} and write $E=\underrightarrow{\lim}\, E_n$ if there is a sequence $\{ (E_n,\tau_n)\}_{n\in\NN}$ of metrizable locally convex spaces such that $(E_n,\tau_n)$ is continuously included in $(E_{n+1},\tau_{n+1})$, and $\tau$ is the finest Hausdorff locally convex topology on $E=\bigcup_n E_n$ such that $(E_n,\tau_n)$ is continuously included in $(E,\tau)$. If in addition all the spaces $(E_n,\tau_n)$ are Fr\'{e}chet spaces and $\tau_{n+1} |_{E_n} = \tau_n$ holds for every $n\in\NN$, the space $E$ is called a {\em strict $(LF)$-space}.

\begin{proposition} \label{p:Schur-Ruess}
A locally convex space $(E,\tau)$ belongs to $\mathbf{RES}$ and has the $\CP$-property if one of the following conditions holds:
\begin{enumerate}
\item[{\rm (i)}] $E$ is  quasi-complete and every separable bounded subset of $E$ is metrizable;
\item[{\rm (ii)}] $E$ is a strict $(LF)$-space (in particular, a Fr\'{e}chet space);
\item[{\rm (iii)}] $E$ is the strong dual $H'_\beta$ of a quasinormable metrizable lcs $H$.
\end{enumerate}
\end{proposition}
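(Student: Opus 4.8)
The plan is to reduce everything to case~(i): conditions~(ii) and~(iii) will turn out to imply the hypotheses of~(i), so once~(i) is proved it only remains to check that strict $(LF)$-spaces and strong duals of quasinormable metrizable spaces are quasi-complete with metrizable separable bounded sets.

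To prove~(i), I would first recall that a quasi-complete lcs is sequentially complete --- a Cauchy sequence lies in a closed bounded, hence complete, subset of $E$ and so converges --- and is therefore locally complete by Corollary 5.1.8 of \cite{bonet}; since in addition its separable bounded subsets are metrizable, Ruess's Theorem \ref{ruess} shows that $E$ has the Rosenthal property. Both the remaining requirement for $E\in\mathbf{RES}$ and the $\CP$-property rest on a single observation: in a quasi-complete space the closure of any bounded precompact set is compact (that closure is closed, bounded --- hence complete --- and precompact, and a complete precompact set is compact). It therefore suffices to see that relatively sequentially compact sets and countable precompact sets are bounded and precompact. A relatively sequentially compact set $A$ is bounded, for otherwise some continuous seminorm would be unbounded along a sequence in $A$, which would then have no convergent subsequence; and $A$ is precompact, for otherwise Theorem 5 of \cite{BGP} would furnish, exactly as in the proof of Lemma~\ref{l:func-bounded-precompact}(i), an infinite uniformly discrete sequence in $A$, which has no Cauchy --- hence no convergent --- subsequence. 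A countable precompact set is bounded because precompact sets are bounded. This proves~(i).

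For~(ii) I would invoke the classical facts that a strict inductive limit of a sequence of Fr\'{e}chet spaces is complete, hence quasi-complete; that by the Dieudonn\'{e}--Schwartz theorem every bounded subset of a strict $(LF)$-space lies in, and is bounded in, one of the Fr\'{e}chet steps $E_n$; and that strictness gives $\tau|_{E_n}=\tau_n$ on each $E_n$. Hence every bounded subset of $E$ is metrizable and~(i) applies. For~(iii) I would use that the strong dual of a metrizable lcs is complete, so that $H'_\beta$ is quasi-complete, together with the characterization from the theory of quasinormable spaces that a metrizable lcs $H$ is quasinormable if and only if the bounded subsets of $H'_\beta$ are metrizable; then all bounded --- in particular all separable bounded --- subsets of $H'_\beta$ are metrizable and~(i) applies.

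The step I expect to be the main obstacle is pinning down and citing the quasinormability input in~(iii), namely the equivalence between quasinormability of a metrizable space $H$ and metrizability of the bounded subsets of its strong dual; aside from this, the proof combines only elementary locally convex space theory, the precompactness dichotomy of \cite{BGP}, and Ruess's theorem.
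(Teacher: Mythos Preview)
Your proof is correct and follows essentially the same route as the paper: reduce (ii) and (iii) to (i), and for (i) combine Ruess's theorem with the observation that in a quasi-complete space every precompact set (in particular every relatively sequentially compact set, via the \cite{BGP} dichotomy) has compact closure. One small caveat on (iii): the ``if and only if'' you state is a slight overreach --- metrizability of bounded subsets of $H'_\beta$ characterizes the \emph{density condition} of Bierstedt--Bonet rather than quasinormability itself --- but you only need the forward implication (quasinormable $\Rightarrow$ bounded sets in $H'_\beta$ metrizable), and that is precisely what the paper extracts from \cite{bierstedt1}.
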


\begin{proof}
(i) Let $A$ be a precompact subset of $E$. Then $A$ is bounded, and since $E$ is  quasi-complete, the closure $\overline{A}$ of $A$  is complete. Therefore $\overline{A}$ is compact by Theorem 3.4.1 of \cite{NaB}. Thus $E$ has the $\CP$-property. The space $E$ has the Rosenthal property by Theorem \ref{ruess}. So to show that $E$ belongs to the class  $\mathbf{RES}$ we have to prove that the closure $\overline{B}$ of a relatively sequentially compact subset $B$ of $E$ is compact. It is easy to see that $B$ is functionally bounded in $E$. Therefore $B$ is precompact by (i) of Lemma \ref{l:func-bounded-precompact}. As we proved above,  $\overline{B}$ is compact.

(ii) Let $E=\underrightarrow{\lim}\, E_n$, where all the $E_n$ are Fr\'{e}chet spaces. Note that, for every $n\in\NN$, the space $E_n$ is a closed subspace of $E$. Theorem 12.1.10 of \cite{NaB} implies that $E$ is complete. By Theorem 12.1.7 of \cite{NaB}, any bounded subset of $E$ sits in some $E_n$ and hence is metrizable. Now (i) applies.

(iii) The space $E$ is quasi-complete by Proposition 11.2.3 of \cite{Jar}, and Theorem 2 (see also the diagram before this theorem) of \cite{bierstedt1} implies that every bounded subset of $E$ is metrizable. Now (i) applies.  \qed
\end{proof}

Now we are ready to prove the main result of the paper.

\medskip
{\em Proof of Theorem \ref{t:respecting-quasi-complete-metriz}.}
As we showed in the proof of (i) in Proposition \ref{p:Schur-Ruess}, the quasi-complete space $E$ is a $\mu$-space and has the $\CP$-property. Also the space $E_w$ is a $\mu$-space by  \cite{Valdivia-77}. Now the theorem follows from Theorems \ref{t:Rosenthal-Ruess-Schur-basic} and  \ref{t:Glicksberg-respecting-lcs} and Lemma \ref{l:func-bounded-precompact}.  \qed

\medskip

Let $X$ and $Y$ be Tychonoff spaces. The space $C(X,Y)$ of all continuous functions from $X$ to $Y$ endowed with the compact-open topology $\tau_k$ is denoted by $\CC(X,Y)$. If $Y=\IR$, we set $C(X):=C(X,\IR)$ and $\CC(X):=\CC(X,\IR)$. The sets
\[
[K;\e] :=\{ f\in C(X): |f(x)|<\e \; \forall x\in K\},
\]
where $K$ is a compact subset of $X$ and $\e>0$, form a base at zero of $\tau_k$.

Let $E$ be an lcs over the field $\mathbf{F}$ of real numbers $\IR$ or complex numbers $\mathbb{C}$. We denote by $\mu(E,E')$ the Mackey topology on $E$ and set $E'_\beta :=(E',\beta(E',E))$, where $\beta(E',E)$ is the strong topology on $E'$. If $A$ is a subset of $E$, we denote by $A^\circ$ and $\overline{\mathrm{acx}}(A)$ the polar of $A$ and the closed absolutely convex hull of $A$, respectively. The Krein theorem \cite[\S~24.5(4)]{Kothe} states that if $K$ is a weakly compact subset of $E$, then $\overline{\mathrm{acx}}(K)$ is weakly compact if and only if $\overline{\mathrm{acx}}(K)$ is $\mu(E,E')$-complete. We shall say that $E$ has the {\em Krein property} or is a {\em Krein space} if $\overline{\mathrm{acx}}(K)$ is weakly compact  for every weakly compact subset $K$ of $E$. Therefore $E$ is a Krein space if $(E,\mu(E,E'))$ is quasi-complete. In particular, every quasibarrelled quasi-complete space is a Krein space. By the definition of $\mu(E',E)$, the Mackey topology $\mu(E',E)$ on $E'$ is always weaker than the restriction $\tau_k|_{E'}$ of the compact-open topology $\tau_k$ of $\CC(E_w, \mathbf{F})$ onto $E'$. Below we show that the equality $\mu(E',E)=\tau_k|_{E'}$ characterizes Krein spaces.

\begin{proposition} \label{p:Krein-characterization}
Let $E$ be an lcs over $\mathbf{F}=\IR$ or $\mathbb{C}$. Then:
\begin{enumerate}
\item[{\rm (i)}] $E$ is a Krein space if and only if $(E',\mu(E',E))$ is a subspace of $\CC(E_w, \mathbf{F})$;
\item[{\rm (ii)}] if $E$ is quasi-complete and weakly respects compactness, then   $(E',\mu(E',E))$ is a subspace of $\CC(E, \mathbf{F})$;
\item[{\rm (iii)}] if $E$ is a quasi-complete Mackey space, then $E$ weakly respects compactness if and only if  $(E',\mu(E',E))$ is a subspace of $\CC(E, \mathbf{F})$.
\end{enumerate}
\end{proposition}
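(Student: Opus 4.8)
The plan is to reduce all three parts to one elementary fact about topologies of uniform convergence. Since each $f\in E'$ is $\sigma(E,E')$-continuous, $E'$ is contained as a set both in $C(E_w,\mathbf F)$ and in $C(E,\mathbf F)$; write $\theta_w$ (resp.\ $\theta_\tau$) for the topology induced on $E'$ by the compact--open topology of $\CC(E_w,\mathbf F)$ (resp.\ of $\CC(E,\mathbf F)$), so that (i)--(iii) become assertions comparing $\mu(E',E)$ with $\theta_w$ and with $\theta_\tau$. I would start from the standard descriptions: $\mu(E',E)$ is the topology of uniform convergence on the absolutely convex $\sigma(E,E')$-compact subsets of $E$, $\theta_w$ is that on all $\sigma(E,E')$-compact subsets, and $\theta_\tau$ is that on all $\tau$-compact subsets, a neighbourhood base at $0$ in each case being given by the polars $K^\circ$ (in $E'$) of the members $K$ of the corresponding family. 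The observation I would isolate is: if $\mathcal A\subseteq\mathcal B$ are two such families of $\sigma(E,E')$-closed absolutely convex sets, then uniform convergence on $\mathcal B$ refines uniform convergence on $\mathcal A$, and the two topologies coincide if and only if every $B\in\mathcal B$ lies in some $A\in\mathcal A$; the nontrivial direction (from equality of topologies to the containment) uses the bipolar theorem, since then $B^\circ$ is also a neighbourhood for uniform convergence on $\mathcal A$, so $A^\circ\subseteq B^\circ$ for some $A\in\mathcal A$, whence $B=B^{\circ\circ}\subseteq A^{\circ\circ}=A$.

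For (i): every absolutely convex $\sigma(E,E')$-compact set is $\sigma(E,E')$-compact, so $\mu(E',E)\le\theta_w$ in general (as already recorded before the statement). If $E$ is a Krein space I would use $\overline{\acx}(K)$, which for every weakly compact $K$ is absolutely convex, weakly compact and contains $K$, to deduce $\theta_w\le\mu(E',E)$ from the observation above, whence $(E',\mu(E',E))$ is a topological subspace of $\CC(E_w,\mathbf F)$. Conversely, from $\mu(E',E)=\theta_w$ and a weakly compact $K$, the compact--open neighbourhood $K^\circ$ contains $L^\circ$ for some absolutely convex weakly compact $L$, so $K\subseteq K^{\circ\circ}\subseteq L^{\circ\circ}=L$ and hence $\overline{\acx}(K)\subseteq L$; being a $\sigma(E,E')$-closed subset of the weakly compact $L$, $\overline{\acx}(K)$ is weakly compact, i.e.\ $E$ is Krein.

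For (ii) I would invoke the classical fact that in a quasi-complete lcs the closed absolutely convex hull of a compact (indeed, precompact) set is compact. When $E$ also weakly respects compactness, the $\tau$-compact and the $\sigma(E,E')$-compact subsets of $E$ coincide, so each $\tau$-compact $C$ lies in the absolutely convex $\sigma(E,E')$-compact set $\overline{\acx}(C)$, while trivially each absolutely convex $\tau$-compact set is $\tau$-compact; the observation of the first paragraph then yields $\mu(E',E)=\theta_\tau$, i.e.\ $(E',\mu(E',E))$ is a subspace of $\CC(E,\mathbf F)$. (Equivalently: $E$ is then a Krein space by the same fact, so $\mu(E',E)=\theta_w$ by (i), and $\theta_w=\theta_\tau$ because weakly compact $=$ $\tau$-compact.)

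For (iii), one implication is exactly (ii), the Mackey hypothesis being unnecessary for it. For the converse I would first note that a quasi-complete Mackey space $E$ is automatically a Krein space, since $(E,\mu(E,E'))=(E,\tau)$ is then quasi-complete and the criterion recalled before the statement ($E$ is Krein whenever $(E,\mu(E,E'))$ is quasi-complete) applies. Assuming $\mu(E',E)=\theta_\tau$, I would replace $\tau$-compact sets by their closed absolutely convex hulls as in (ii) and apply the observation to conclude that every absolutely convex $\sigma(E,E')$-compact subset of $E$ is already $\tau$-compact; then, for an arbitrary $\sigma(E,E')$-compact $K$, the set $\overline{\acx}(K)$ is absolutely convex and $\sigma(E,E')$-compact (Krein property), hence $\tau$-compact, and $K$ --- being $\sigma(E,E')$-closed and therefore $\tau$-closed --- is a $\tau$-closed subset of it, so $K$ is $\tau$-compact; thus every $\sigma(E,E')$-compact subset of $E$ is $\tau$-compact and $E$ weakly respects compactness. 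I expect the converse half of (iii) to be the only real obstacle: the crux is recognising that being Mackey and quasi-complete already delivers the Krein property, and that the absolute-convexity restriction built into $\mu(E',E)$ can be removed precisely because $\overline{\acx}(K)$ is available and $\sigma(E,E')$-closed sets are automatically $\tau$-closed. Beyond the bipolar theorem and the compactness of closed absolutely convex hulls of compacta in quasi-complete spaces, everything reduces to material already in the excerpt.
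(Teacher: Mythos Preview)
Your proposal is correct and follows essentially the same route as the paper: both arguments reduce each part to a bipolar computation comparing polars of (absolutely convex) weakly compact sets with polars of $\tau$-compact sets, using the Krein property and the compactness of $\overline{\acx}(C)$ in quasi-complete spaces at exactly the same points. The only difference is organizational: you extract the bipolar step once as a general ``observation'' about when two polar topologies coincide and then invoke it three times, whereas the paper carries out the polar inclusion $K\subseteq K^{\circ\circ}\subseteq C^{\circ\circ}=C$ explicitly in each part; the content is identical.
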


\begin{proof}
(i) Assume that $E$ is a Krein space.  Fix a weakly compact subset $K$  of $E$. Then $\overline{\mathrm{acx}}(K)$ is also weakly compact by the Krein property. Therefore $\mu(E',E)\geq\tau_k|_{E'}$. Thus $\mu(E',E)=\tau_k|_{E'}$ and hence  $(E',\mu(E',E))$ is a subspace of $\CC(E_w, \mathbf{F})$. Conversely, assume that  $(E',\mu(E',E))$ is a subspace of $\CC(E_w, \mathbf{F})$. Fix a  weakly compact subset $K$ of $E$. Then there exists an absolutely convex weakly compact subset $C$ of $E$ such that $C^\circ \subseteq K^\circ$. Therefore $K\subseteq K^{\circ\circ} \subseteq C^{\circ\circ}=C$. Thus $\overline{\mathrm{acx}}(K)$ is a weakly compact subset of $E$.

(ii) If $K$ is an absolutely convex weakly compact subset of $E$, then $K$ is compact in $E$. Therefore $\mu(E',E)\leq \tau_k|_{E'}$. Conversely, for every compact subset $K$ of $E$, the quasi-completeness of $E$ implies that the closed absolutely convex  hull of $K$ is also compact in $E$. Thus  $\mu(E',E)\geq \tau_k|_{E'}$ and hence $\mu(E',E)=\tau_k|_{E'}$.

(iii) The necessity follows from (ii). To prove sufficiency, let $K$ be a weakly compact subset of $E$. As $E$ is a Krein space, the set $K_0:=\overline{\mathrm{acx}}(K)$ is also weakly compact. By assumption there is a compact subset $C_0$ of $E$ such that $C_0^\circ \subseteq K_0^\circ$. Since $E$ is quasi-complete, we obtain that the set $C:=\overline{\mathrm{acx}}(C_0)$ is also a compact subset of $E$. Then $K\subseteq K^{\circ\circ} \subseteq C_0^{\circ\circ} \subseteq C^{\circ\circ}=C$. Therefore $K$ being closed in $E$ is a compact subset of $E$. Thus $E$ weakly respects compactness.  \qed
\end{proof}

The class of strict $(LF)$-spaces is one of the most important classes of locally convex spaces. Below we apply Theorem \ref{t:respecting-quasi-complete-metriz} and Proposition \ref{p:Krein-characterization} to characterize strict $(LF)$-spaces with the Schur property.

\begin{corollary} \label{c:Schur-strict-LF}
Let $E=\underrightarrow{\lim}\, E_n$ be a strict $(LF)$-space over  $\mathbf{F}=\IR$ or $\mathbb{C}$. Then the following assertions are equivalent:
\begin{enumerate}
\item[{\rm (i)}] there is a $\mathcal{P}\in \Pp$ such that $E$ weakly respects $\mathcal{P}$;
\item[{\rm (ii)}] $E$ weakly respects all  $\mathcal{P}\in\Pp$;
\item[{\rm (iii)}] every bounded non-precompact sequence in $E$ has a subsequence which is equivalent to the unit  basis of $\ell_1$;
\item[{\rm (iv)}] there is a $\mathcal{P}\in \Pp$ such that all the spaces $E_n$ weakly respect $\mathcal{P}$;
\item[{\rm (v)}]  for every $n\in\NN$, the space $E_n$ weakly respects all $\mathcal{P}\in\Pp$;
\item[{\rm (vi)}] every non-precompact bounded subset of $E$ has an infinite subset which is discrete and $C$-embedded in $E_w$;
\item[{\rm (vii)}] $(E',\mu(E',E))$ is a subspace of $\CC(E, \mathbf{F})$.
\end{enumerate}
\end{corollary}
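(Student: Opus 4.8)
The plan is to show that each of the conditions (i)--(vii) is equivalent to the Schur property of $E$; the asserted equivalences then follow. The first task is to check that the earlier results apply. A strict $(LF)$-space $E=\underrightarrow{\lim}\,E_n$ is complete, hence quasi-complete; every bounded subset of $E$ is contained in some step $E_n$ and is therefore metrizable; $E$ is barrelled, hence a Mackey space; and $E_w$ is a $\mu$-space by \cite{Valdivia-77}. Moreover each step $E_n$ is a closed subspace of $E$ and, being a Fr\'{e}chet space, satisfies the hypotheses of Theorem \ref{t:respecting-quasi-complete-metriz} as well. With this in place, Theorem \ref{t:respecting-quasi-complete-metriz} applied to $E$ shows that, for each $\mathcal{P}\in\Pp$, the condition that $E$ weakly respects $\mathcal{P}$ is equivalent to the Schur property of $E$ (these being conditions (i)--(vi) of that theorem), and likewise for condition (iii) of the present statement (condition (vii) of that theorem). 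Hence (i) and (ii), which quantify respectively existentially and universally over $\mathcal{P}\in\Pp$, and (iii) are each equivalent to the Schur property of $E$.

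Next I would dispose of (vii) and (vi). For (vii): $E$ is a quasi-complete Mackey space, so Proposition \ref{p:Krein-characterization}(iii) gives that $E$ weakly respects compactness if and only if $(E',\mu(E',E))$ is a subspace of $\CC(E,\mathbf{F})$; since weakly respecting compactness has already been identified with the Schur property, (vii) is equivalent to it. For (vi): a bounded non-precompact subset of $E$ is not functionally bounded in $E$ by Lemma \ref{l:func-bounded-precompact}(i). Hence, if $E$ has the Schur property, equivalently if $E$ weakly respects compactness, then condition (v) of Theorem \ref{t:Glicksberg-respecting-lcs} holds (that theorem applies because $E_w$ is a $\mu$-space), and in particular (vi) holds. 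Conversely, (vi) readily implies condition (viii) of Theorem \ref{t:respecting-quasi-complete-metriz}: the range of a bounded non-precompact sequence in $E$ is a bounded non-precompact subset, to which (vi) furnishes an infinite subset that is discrete and $C$-embedded in $E_w$, and this subset, enumerated, is a subsequence of the required type. So (vi) implies the Schur property of $E$.

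It remains to treat the ``step'' conditions (iv) and (v) and to link them with $E$. The implication (v)$\Rightarrow$(iv) is trivial. Since every $E_n$ is a Fr\'{e}chet space, Theorem \ref{t:respecting-quasi-complete-metriz} applied to $E_n$ shows that $E_n$ weakly respects some $\mathcal{P}\in\Pp$ if and only if $E_n$ has the Schur property if and only if $E_n$ weakly respects every $\mathcal{P}\in\Pp$; thus (iv) forces each $E_n$ to have the Schur property, whence (v). To close the loop I would prove that $E$ has the Schur property if and only if every $E_n$ does. The forward direction is Proposition \ref{pSubSchur-LCS} applied to the subspace $E_n\subseteq E$ with $\mathcal{P}=\mathcal{S}\in\Pp_0$. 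For the converse, assume every $E_n$ has the Schur property and let $\{x_k\}$ converge to $x_0$ in $E_w$; being weakly convergent, the sequence is bounded and so lies in some step $E_m$, and since $\tau|_{E_m}=\tau_m$ and, by the Hahn--Banach theorem, $\sigma(E,E')|_{E_m}=\sigma(E_m,E_m')$, it converges to $x_0$ in $(E_m)_w$, hence (Schur property of $E_m$) in $\tau_m$, hence in $\tau$. This completes the cycle of equivalences.

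The substantial point is the transfer between $E$ and its sequence of steps: one must know that every bounded subset of a strict $(LF)$-space is confined to a single step and that the weak topology of $E$ restricts to the weak topology of each step, so that the characterizations of the Schur property obtained for $E$ and for the Fr\'{e}chet spaces $E_n$ via Theorem \ref{t:respecting-quasi-complete-metriz} can be carried across the inductive limit in both directions. The rest is verifying the hypotheses of Theorems \ref{t:respecting-quasi-complete-metriz} and \ref{t:Glicksberg-respecting-lcs} and of Proposition \ref{p:Krein-characterization} for $E$ (and the $E_n$) and reading off their conclusions.
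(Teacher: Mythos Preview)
Your proof is correct and follows the paper's overall strategy: verify that $E$ (and each $E_n$) satisfies the hypotheses of Theorem~\ref{t:respecting-quasi-complete-metriz} and Proposition~\ref{p:Krein-characterization}, and read off the equivalences. The one organizational difference worth noting concerns the step conditions (iv)/(v). The paper cycles (iii)$\Rightarrow$(v)$\Rightarrow$(iv)$\Rightarrow$(iii) via the $\ell_1$-basis characterization: a bounded non-precompact sequence in $E_n$ is also one in $E$ (since $E_n$ is closed in $E$), so (iii) for $E$ yields an $\ell_1$-subsequence whose closed span stays in $E_n$; conversely a bounded non-precompact sequence in $E$ lives in some $E_n$, and (iii) for that $E_n$ produces the $\ell_1$-subsequence. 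You instead factor through the equivalence ``$E$ has Schur $\Leftrightarrow$ every $E_n$ has Schur'', using Proposition~\ref{pSubSchur-LCS} for one direction and the fact that weakly convergent sequences localize to a step (with $\sigma(E,E')|_{E_m}=\sigma(E_m,E_m')$) for the other. Both routes are short and rely on the same structural facts about strict $(LF)$-spaces; yours has the mild advantage of isolating the clean statement that Schur passes up and down the inductive limit, while the paper's is slightly more self-contained at the level of the $\ell_1$ criterion. One small wording issue: when you pass from (vi) to condition (viii) of Theorem~\ref{t:respecting-quasi-complete-metriz}, ``this subset, enumerated, is a subsequence'' should be phrased as choosing increasing indices $n_1<n_2<\cdots$ with $a_{n_k}$ distinct elements of the discrete $C$-embedded set $B$; the resulting range is an infinite subset of $B$ and hence still discrete and $C$-embedded in $E_w$ (extend any function on it to $B$ by zero, then to $E_w$).
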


\begin{proof}
The equivalences (i)$\Leftrightarrow$(ii)$\Leftrightarrow$(iii)$\Leftrightarrow$(vi) follow from Theorem \ref{t:respecting-quasi-complete-metriz} and (ii) of Proposition \ref{p:Schur-Ruess}.

(iii)$\Rightarrow$(v) Fix $n\in\NN$ and let $S$ be a bounded non-precompact sequence in $E_n$. As $E_n$ is a closed subspace of $E$, there is a subsequence $S'$ of  $S$  which is equivalent to the unit basis of $\ell_1$. Clearly, the closure of $\spn(S')$ in $E$ is contained in $E_n$. Therefore $E_n$ respects all the properties $\mathcal{P}\in\Pp$ by Theorem  \ref{t:respecting-quasi-complete-metriz} and (ii) of Proposition \ref{p:Schur-Ruess}.

(v)$\Rightarrow$(iv) is trivial. To prove the implication (iv)$\Rightarrow$(iii), let $S$ be a bounded non-precompact sequence in $E$. Then $S\subseteq E_n$ for some $n\in\NN$. Applying Theorem  \ref{t:respecting-quasi-complete-metriz} and (ii) of Proposition \ref{p:Schur-Ruess} to the Fr\'{e}chet space $E_n$, we obtain that $S$ has a subsequence $S'$ which is equivalent to the unit basis of $\ell_1$. It remains to note that $E_n$ is a closed subspace of $E$.

Taking into account that $E$ is a complete Mackey space (see \cite[Theorem~12.1.10]{NaB} and \cite[Corollary~8.8.11]{Jar}), (ii) and (vii) are equivalent by (iii) of  Proposition \ref{p:Krein-characterization} and Theorem \ref{t:respecting-quasi-complete-metriz}.  \qed
\end{proof}

Let $K$ be an infinite compact space. Then the Banach space $C(K)$ does not have the Schur property since it contains an isomorphic copy of $c_0$, see Theorem 14.26 of \cite{fabian-10}. The next proposition (which, perhaps,  is known but hard to find explicitly stated) generalizes this result.

\begin{proposition} \label{p:Schur-Ck}
For a Tychonoff space $X$, the space $\CC(X)$ has the Schur property if and only if $X$ does not contain an infinite compact subset.
\end{proposition}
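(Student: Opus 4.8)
The plan is to read off both implications directly from the description of the topology and of the topological dual of $\CC(X)$.

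\emph{The direction ``no infinite compact subset $\Rightarrow$ Schur''.} If every compact subset of $X$ is finite, then the subbasic neighbourhoods $[K;\e]$ of $\tau_k$ are exactly the subbasic neighbourhoods of the topology $\tau_p$ of pointwise convergence, so $\CC(X)=C_p(X)$. Since the topological dual of $C_p(X)$ is the linear span of the evaluations $\{\delta_x:x\in X\}$, the weak topology $\sigma\bigl(C_p(X),C_p(X)'\bigr)$ coincides with $\tau_p$ itself; hence $C_p(X)$ and $\bigl(C_p(X)\bigr)_w$ are literally the same space and the Schur property holds trivially.

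\emph{The converse, by contraposition.} Assume $K\subseteq X$ is an infinite compact subset; I shall exhibit a sequence in $\CC(X)$ that converges to $0$ in the weak topology but not in $\tau_k$. Note that $K$ is compact Hausdorff and closed in $X$. An infinite compact Hausdorff space contains a countably infinite discrete subspace, so there are pairwise distinct $y_n\in K$ with $y_n\notin\overline{\{y_m:m\ne n\}}$, where the closure is taken in $X$. As $X$ is Tychonoff, for each $n$ choose $g_n\in C(X)$ with $0\le g_n\le 1$, $g_n(y_n)=1$ and $g_n\equiv 0$ on $\overline{\{y_m:m\ne n\}}$, so that $g_n(y_m)=\delta_{nm}$. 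The key move is then to pass to the telescoping sequence
\[
h_n:=\min\!\Bigl(\textstyle\sum_{i=1}^{n}g_i,\,1\Bigr)-\min\!\Bigl(\textstyle\sum_{i=1}^{n-1}g_i,\,1\Bigr)\qquad(n\in\NN),
\]
with the empty sum understood to be $0$. Each $h_n$ lies in $C(X)$, one has $0\le h_n\le 1$ and $h_n(y_m)=\delta_{nm}$, and $\sum_{n=1}^{N}h_n=\min\bigl(\sum_{i=1}^{N}g_i,\,1\bigr)\le 1$ for every $N$; in particular $h_n\to 0$ pointwise on the whole of $X$, while $h_n(y_n)=1$ for all $n$.

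\emph{Checking the two convergences.} By continuity, every $\Lambda\in\CC(X)'$ satisfies $|\Lambda(f)|\le c\,\max_{K_0}|f|$ for some compact $K_0\subseteq X$ and some $c>0$; by Hahn--Banach together with the Riesz representation theorem, $\Lambda(f)=\int_{K_0}f|_{K_0}\,d\mu$ for some finite Radon measure $\mu$ on $K_0$. Since $\{h_n\}$ is uniformly bounded and $h_n\to 0$ pointwise, dominated convergence gives $\Lambda(h_n)\to 0$; hence $h_n\to 0$ in $\CC(X)_w$. On the other hand $\sup_{x\in K}|h_n(x)|\ge h_n(y_n)=1$ and $K$ is compact, so $(h_n)$ does not converge to $0$ in $\tau_k$, and therefore (its only possible $\tau_k$-limit being its weak limit $0$) it does not converge in $\tau_k$ at all. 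Thus $\CC(X)$ and $\CC(X)_w$ do not have the same convergent sequences, i.e. $\CC(X)$ fails the Schur property, which proves the contrapositive.

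The step I expect to be the main obstacle is precisely this passage from $K$ to $X$: a naive continuous extension to $X$ of a normalized weakly null sequence of $\CC(K)=C(K)$ need not be weakly null in $\CC(X)$, since $\CC(X)'$ also contains measures supported away from $K$. The telescoping construction is exactly what forces the extended functions to be summable, hence pointwise null, on the whole of $X$ rather than only on $K$, and this is what makes the dominated-convergence argument go through. The two background facts used here — the form of $\CC(X)'$ and the existence of an infinite discrete subspace inside an infinite compact Hausdorff space — are standard.
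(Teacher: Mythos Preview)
Your proof is correct. The easy direction matches the paper verbatim. For the contrapositive the paper argues slightly differently: it invokes Lemma~11.7.1 of \cite{Jar} to pick points $x_n\in K$ together with open sets $U_n\subseteq X$ whose \emph{closures} are pairwise disjoint, and then takes $f_n\colon X\to[0,1]$ supported in $U_n$ with $f_n(x_n)=1$. Disjointness of the $U_n$ alone gives $\sum_n |\mu|(U_n)<\infty$ and hence $\mu(f_n)\to 0$ for every compactly supported regular Borel measure $\mu$, without appealing to dominated convergence. Your route trades that stronger separation lemma for the weaker fact that $K$ contains an infinite discrete subset, and then repairs the missing disjointness with the telescoping device $h_n=\min(\sum_{i\le n}g_i,1)-\min(\sum_{i\le n-1}g_i,1)$, which forces $\sum_n h_n\le 1$ and hence $h_n\to 0$ pointwise on all of $X$; dominated convergence then finishes the job. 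Both arguments are short; the paper's is a touch more direct once the Jarchow lemma is granted, while yours needs only the most elementary separation in $K$ and the dual description of $\CC(X)$, at the price of the extra telescoping step. Your diagnosis of the ``main obstacle'' is exactly right and your fix is clean---the paper simply sidesteps that obstacle by arranging disjoint supports from the outset.
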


\begin{proof}
Let $\CC(X)$ have the Schur property. Suppose for a contradiction that $X$ contains an infinite compact subset $K$. By Lemma 11.7.1 of \cite{Jar}, take a one-to-one sequence $\{x_n\}_{n\in\NN}$ in $K$ and a sequence $\{U_n\}_{n\in\NN}$ of open subsets of $X$ such that $x_n\in U_n$ for each $n\in\NN$ and  $\overline{U_n}\cap \overline{U_m} =\emptyset$ for every distinct $n,m\in\NN$. For  every $n\in\NN$, take a continuous function $f_n:X\to [0,1]$ with support in $U_n$ and such that $f_n(x_n)=1$. We claim that $f_n$ weakly converges to zero. Indeed, fix a regular Borel measure $\mu\in M_c(X)=\CC(X)'$ with compact support. We can assume that $\mu$ is positive. Since $\mu$ is finite and regular, observe that $\mu(U_n)\to 0$. Then $0\leq \mu(f_n) \leq \mu(U_n) \to 0$.
This proves the claim. On the other hand, by construction we have $f_n \not\in [K;1/3]$ for every $n\in\NN$. Therefore $f_n\not\to 0$ in the compact-open topology. Thus $\CC(X)$ does not have the Schur property.

Conversely, if $X$ does not contain  infinite compact subsets, then the compact-open topology $\tau_k$ coincides with the pointwise topology and hence $\big(\CC(X)\big)_w= \CC(X)$. Thus $\CC(X)$ trivially has the Schur property.  \qed
\end{proof}


\section{The Schur property and Dunford--Pettis type properties} \label{sec:Schur-DP}


In this section  we give concrete constructions of Schur spaces and 
extend some known results for Banach and Fr\'{e}chet spaces to larger classes of locally convex spaces.

Recall that a Tychonoff space $X$ is called an {\em angelic space} if (1) every relatively countably compact subset of $X$ is relatively compact, and (2) any compact subspace of $X$ is Fr\'{e}chet--Urysohn. Note that any subspace of an angelic space is angelic, and a subset $A$ of an angelic space $X$ is compact if and only if it is countably compact if and only if $A$ is sequentially compact, see Lemma 0.3 of \cite{Pryce}. Being motivated by the last property we say that a  Tychonoff space $X$ is {\em sequentially angelic} if a subset $K$ of $X$ is compact if and only if $K$ is sequentially compact. 

Recall that an lcs $E$ has the {\em Dunford--Pettis property} ($(DP)$ {\em property} for short)  if every absolutely convex $\sigma(E,E')$-compact subset of $E$ is precompact for the topology $\tau_{\Sigma'}$ of uniform convergence on the absolutely convex, equicontinuous, $\sigma(E',E'')$-compact subset of $E'$ (see \S 9.4 of \cite{Edwards}); $E$ has the {\em Grothendieck property} if every weak-$\ast$ convergent sequence in $E'$ is weakly convergent. Recall also (see \cite{BFV}) that a Banach space $E$ has  the {\em $\ast$-Dunford--Pettis property} ($\ast$-$(DP)$ {\em property}) if given a weakly null sequence $\{ x_n\}_{n\in\NN}$ in $E$ and a weakly-$\ast$ null sequence  $\{ \chi_n\}_{n\in\NN}$ in $E'$, then $\lim_n \chi_n(x_n)=0$. Analogously we say that an lcs $E$ has
\begin{enumerate}
\item[$\bullet$] the {\em sequential Dunford--Pettis property} ($(sDP)$ {\em property})  if  given weakly null sequences $\{ x_n\}_{n\in\NN}$ and $\{ \chi_n\}_{n\in\NN}$ in $E$ and $E'_\beta$, respectively, then $\lim_n \chi_n(x_n)=0$;

\item[$\bullet$] the {\em $\ast$-sequential Dunford--Pettis property} ($\ast$-$(sDP)$ {\em property}) if given a weakly null sequence $\{ x_n\}_{n\in\NN}$ in $E$ and a weakly-$\ast$ null sequence  $\{ \chi_n\}_{n\in\NN}$ in $E'$, then $\lim_n \chi_n(x_n)=0$.
\end{enumerate}
Clearly, the $\ast$-$(sDP)$ property implies the $(sDP)$ property, but the converse is not true in general as the Banach space $c_0$ shows.
Any Banach space $E$ with the $\ast$-$(DP)$ property  contains an isomorphic copy of $\ell_1$, see \cite[Proposition~4]{JPZ}. It is easy to see that every Banach space $E$ with the Schur property has the $\ast$-$(sDP)$ property, but in general the converse is false (for example $E=\ell_\infty$).
Proposition 3.3 of \cite{ABR} implies  that: (1) every barrelled space with the $(DP)$-property has the $(sDP)$ property, and (2) if both $E$ and $E'_\beta$ are sequentially angelic and $E$ has the $(sDP)$ property, then $E$ has the $(DP)$ property. Moreover, if $E$ is a strict $(LF)$-space, then $E$ has the $(DP)$ property if and only if it has the $(sDP)$-property, see Corollary 3.4 of \cite{ABR}. Banach spaces with the  $\ast$-$(DP)$ property are studied in \cite{CGL-08}.

The next proposition generalizes the corresponding well known results for Banach spaces. Recall that an lcs $E$ is called {\em $c_0$-barrelled} if every $\sigma(E',E)$-null sequence in $E'$ is equicontinuous.

\begin{proposition} \label{p:Schur-Dunford-Pettis}
Let $E$ be a locally convex space. Then:
\begin{enumerate}
\item[{\rm (i)}] if $E$ is $c_0$-barrelled and  has the Schur property, then $E$ has the $\ast$-$(sDP)$ property;
\item[{\rm (ii)}] if $E'_\beta$ has the Schur property, then $E$ has the $(sDP)$ property;
\item[{\rm (iii)}] if $E$ is quasibarrelled and $E'_\beta$ has the $(sDP)$ property, then also $E$ has the $(sDP)$ property.
\end{enumerate}
\end{proposition}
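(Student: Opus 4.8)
The plan is to treat the three items separately; in each case it suffices to unwind the relevant duality and apply the hypothesis, using repeatedly that a weakly convergent sequence is bounded and that the canonical map $E\to E''$ embeds $E$ as a subspace of $E''$. For (i), let $\{x_n\}_{n\in\NN}$ be weakly null in $E$ and $\{\chi_n\}_{n\in\NN}$ be $\sigma(E',E)$-null in $E'$. The Schur property gives $x_n\to 0$ in $\tau$, and $c_0$-barrelledness makes $\{\chi_n\}_{n\in\NN}$ equicontinuous, so there is a $\tau$-neighbourhood $U$ of $0$ with $|\chi_n(u)|\leq 1$ for all $u\in U$ and all $n$; given $\e>0$ we have $x_n\in\e U$ eventually, hence $|\chi_n(x_n)|\leq\e$ eventually, so $\chi_n(x_n)\to 0$. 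For (ii), let $\{x_n\}_{n\in\NN}$ be weakly null in $E$ and $\{\chi_n\}_{n\in\NN}$ be weakly null in $E'_\beta$, i.e. $\sigma(E',E'')$-null; the Schur property of $E'_\beta$ gives $\chi_n\to 0$ in $\beta(E',E)$, and since $B:=\{x_n:n\in\NN\}$ is bounded in $E$ we get $\sup_{x\in B}|\chi_n(x)|\to 0$, whence $|\chi_n(x_n)|\to 0$. Both arguments are short.

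For (iii), let $\{x_n\}_{n\in\NN}$ be weakly null in $E$ and $\{\chi_n\}_{n\in\NN}$ weakly null in $E'_\beta$, and set $F:=E'_\beta$, so that $F'=E''$ and $F'_\beta=E''_\beta$ (the strong bidual of $E$, whose dual is $E'''$). I would apply the $(sDP)$ property of $F$ to the sequence $\{\chi_n\}\subseteq F$, which is weakly null in $F$ by hypothesis, and to the sequence $\{x_n\}\subseteq E\subseteq E''=F'$. The one point to check is that $\{x_n\}$ is weakly null in $F'_\beta=E''_\beta$, i.e. $\sigma(E'',E''')$-null. This is where quasibarrelledness is used: $E$ quasibarrelled means $\tau=\beta(E'',E')\uhr_E$ (equivalently, $E$ is a topological subspace of $E''_\beta$), so every $\xi\in E'''=(E''_\beta)'$ restricts on $E$ to a $\tau$-continuous functional, i.e. $\xi\uhr_E\in E'$; since $\{x_n\}$ is $\sigma(E,E')$-null it follows that $\xi(x_n)=(\xi\uhr_E)(x_n)\to 0$ for every $\xi\in E'''$. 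Hence $\{x_n\}$ is weakly null in $F'_\beta$, and the $(sDP)$ property of $F$ yields $\chi_n(x_n)=\langle x_n,\chi_n\rangle\to 0$, so $E$ has the $(sDP)$ property.

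The only step that is not routine is the duality bookkeeping at the end of (iii): one must upgrade $\sigma(E,E')$-nullity of $\{x_n\}$ to $\sigma(E'',E''')$-nullity in the strong bidual, and the inclusion $E'''\uhr_E\subseteq E'$ needed for this holds exactly because $E$ is quasibarrelled. Everything else — the appeals to the Schur property and to equicontinuity — is immediate.
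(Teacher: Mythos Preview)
Your proof is correct and follows essentially the same approach as the paper's: parts (i) and (ii) are virtually identical to the paper's arguments, and in (iii) the paper simply cites a standard result (that a quasibarrelled space embeds topologically into its strong bidual $(E'_\beta)'_\beta$) where you unpack the duality to show directly that $E'''\uhr_E\subseteq E'$ and hence that $\{x_n\}$ is $\sigma(E'',E''')$-null. The only minor remark is notational: writing ``$\tau=\beta(E'',E')\uhr_E$'' is slightly ambiguous, but your parenthetical ``$E$ is a topological subspace of $E''_\beta$'' makes the intended meaning clear and is exactly what is needed.
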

\begin{proof}
(i) Let $\{ x_n\}_{n\in\NN}$ be a weakly null sequence in $E$ and $\{ \chi_n\}_{n\in\NN}$ be a weakly-$\ast$ null sequence  in $E'_\beta$. As $E$ is $c_0$-barrelled,  there is a neighborhood $U$ of zero in $E$ such that $\{ \chi_n\}_{n\in\NN} \subseteq U^\circ$. By the Schur property, for each $\e>0$ choose $N\in \NN$ such that $x_n \in \e U$ for every $n>N$. Then $|\chi_n(x_n)|\leq \e$ for $n>N$. Thus $\chi_n(x_n)\to 0$ and $E$ has the $\ast$-$(sDP)$ property.

(ii) Let $\{ x_n\}_{n\in\NN}$ and $\{ \chi_n\}_{n\in\NN}$ be weakly null sequences in $E$ and $E'_\beta$, respectively. Since $E'_\beta$ has the Schur property, $\chi_n \to 0$ in the strong topology $\beta(E',E)$ on $E'$. Clearly, $S=\{ x_n\}_{n\in\NN}$ is a bounded subset of $E$. Therefore for each $\e>0$ there is an $N\in\NN$ such that $|\chi_n(x)|<\e$ for every $x\in S$ and every $n>N$. Thus $\lim_n \chi_n(x_n)=0$.

(iii) Let $\{ x_n\}_{n\in\NN}$ and $\{ \chi_n\}_{n\in\NN}$ be weakly null sequences in $E$ and $E'_\beta$, respectively. Since $E$ is quasibarrelled, it is a subspace of $(E'_\beta)'_\beta$ by Theorem 15.2.3 of \cite{NaB}. So $x_n\to 0$ also in $\sigma(E'',E''')$. Now the $(sDP)$ property of $E'_\beta$ implies $\chi_n (x_n) \to 0$. Thus $E$ has the $(sDP)$ property.  \qed
\end{proof}

\begin{remark} {\em
P.~Pethe and N.~Thakare showed in \cite{Pethe-Thakare} that the Banach dual  $E'$ of a Banach space $E$ has the Schur property if and only if $E$ has the Dunford--Pettis property and does not contain an isomorphic copy of $\ell_1$.   \qed }
\end{remark}

\begin{proposition} \label{p:Dunford-Pettis-Mackey}
Let $E$ be an lcs with the $(sDP)$ property such that $E_w$ is  sequentially  angelic.
\begin{enumerate}
\item[{\rm (i)}] Every $\sigma(E',E'')$-null sequence $\{ \chi_{n}\}_{n\in\NN}$ in $E'$ converges to zero also in $\mu(E',E)$. Consequently, every $\sigma(E',E'')$-sequentially compact subset of $E'$ is also $\mu(E',E)$-sequentially compact.
\item[{\rm (ii)}]  If additionally   $E$ has the Grothendieck property, then   $\big( E', \mu(E',E)\big)$ and $\big( E',\sigma(E',E'')\big)$ have the same sequentially compact sets. Moreover, the space $H:=\big( E', \mu(E',E)\big)$ has the Schur property.
\end{enumerate}
\end{proposition}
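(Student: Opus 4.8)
The plan is to prove (i) first, by contradiction, and then read off the rest. By the Mackey--Arens theorem a base of zero-neighbourhoods for $\mu(E',E)$ consists of the polars $K^\circ$ of absolutely convex $\sigma(E,E')$-compact subsets $K$ of $E$, so if $\{\chi_n\}$ is $\sigma(E',E'')$-null but does not converge to zero in $\mu(E',E)$, there are such a $K$, an $\e>0$, a subsequence (which I relabel) and points $x_n\in K$ with $|\chi_n(x_n)|\ge\e$. Since $E_w$ is sequentially angelic, the $\sigma(E,E')$-compact set $K$ is $\sigma(E,E')$-sequentially compact; passing to a further subsequence I may assume $x_n\to x$ in $\sigma(E,E')$ for some $x\in K$. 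Then $\{x_n-x\}$ is a weakly null sequence in $E$ and $\{\chi_n\}$ is a weakly null sequence in $E'_\beta$, so the $(sDP)$ property yields $\chi_n(x_n-x)\to0$; moreover $\chi_n(x)\to0$ since $x\in E\subseteq E''$ and $\chi_n\to0$ in $\sigma(E',E'')$. Hence $\chi_n(x_n)\to0$, contradicting $|\chi_n(x_n)|\ge\e$. For the second assertion of (i), let $A\subseteq E'$ be $\sigma(E',E'')$-sequentially compact and take any sequence in $A$; extract a subsequence converging in $\sigma(E',E'')$ to some $\psi\in A$, apply the first assertion of (i) to the $\sigma(E',E'')$-null sequence obtained after subtracting $\psi$, and add $\psi$ back (this is legitimate because $\mu(E',E)$ is a linear, hence translation-invariant, topology).

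For (ii) I would argue as follows. One half of ``same sequentially compact sets'' is precisely the second assertion of (i). For the converse half, note $\mu(E',E)\ge\sigma(E',E)$, so a $\mu(E',E)$-convergent sequence in $E'$ is $\sigma(E',E)$-convergent; by the Grothendieck property it is then $\sigma(E',E'')$-convergent, necessarily to the same limit since $\sigma(E',E'')\ge\sigma(E',E)$. Hence every $\mu(E',E)$-sequentially compact subset of $E'$ is $\sigma(E',E'')$-sequentially compact, and the two families coincide. Finally, by the Mackey--Arens theorem $H'=\big(E',\mu(E',E)\big)'=E$, so $H_w=\big(E',\sigma(E',E)\big)$; thus a sequence in $E'$ converging in $H_w$ is weak-$*$ convergent, hence, after subtracting the limit, weak-$*$ null, hence $\sigma(E',E'')$-null by the Grothendieck property, hence $\mu(E',E)$-null by (i); adding the limit back, it converges in $H$. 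Therefore $H$ and $H_w$ have the same convergent sequences, i.e.\ $H$ has the Schur property.

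The argument is short, so the ``hard part'' is organizational rather than technical: one must keep rigorously separate the three topologies on $E'$ --- the weak-$*$ topology $\sigma(E',E)$, the weak topology $\sigma(E',E'')$ of the strong dual, and the Mackey topology $\mu(E',E)$ --- and feed each hypothesis into the right place: $(sDP)$ applies only to sequences of functionals that are $\sigma(E',E'')$-null, the Grothendieck property is exactly the bridge from $\sigma(E',E)$-null to $\sigma(E',E'')$-null, and sequential angelicity of $E_w$ is what turns the compactness of $K\subseteq E$ into a convergent-subsequence extraction. A small but essential point in (i) is that the cluster point $x$ lies in $E$ (indeed in $K$), which is why $\chi_n(x)\to0$ is immediate; this is precisely where it matters that it is $E_w$, and not some larger space, whose sequential angelicity is assumed.
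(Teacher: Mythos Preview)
Your proof is correct and follows essentially the same route as the paper's: a contradiction argument for (i) that extracts points $x_n\in K$ with $|\chi_n(x_n)|\ge\e$, uses sequential angelicity of $E_w$ to pass to a weakly convergent subsequence $x_n\to x$, and then splits $\chi_n(x_n)=\chi_n(x)+\chi_n(x_n-x)$, invoking $(sDP)$ for the second term; and for (ii) the same use of the Grothendieck property to pass from $\sigma(E',E)$-convergence to $\sigma(E',E'')$-convergence before applying (i). Your presentation is slightly more explicit than the paper's in invoking the Mackey--Arens theorem to identify $H'=E$ (and hence $H_w=\big(E',\sigma(E',E)\big)$) and in justifying the translation step, but the substance is identical.
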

\begin{proof}
(i) (cf. \cite[19.18(c)]{Dom}) Suppose for a contradiction that $\chi_n \not\to 0$ in $\mu(E',E)$. Then there is a standard $\mu(E',E)$-neighborhood $[K;\e]=\{ \chi\in E': |\chi(x)|<\e\, \forall x\in K\}$ of zero in $E'$, where $K$ is an absolutely convex compact subset of $E_w$, which does not contain a subsequence $\{ \chi_{n_k}\}_{k\in\NN}$ of $\{ \chi_{n}\}_{n\in\NN}$. Therefore, for every $k\in\NN$ one can find $x_k\in K$ such that $\big| \chi_{n_k}(x_k)\big| \geq \e$. Since $E_w$ is  sequentially  angelic, $K$ is sequentially compact and hence we can assume that $x_k$ weakly converges to $x\in K$. Then the $(sDP)$ property implies
\[
\e \leq \big| \chi_{n_k}(x_k)\big|\leq \big| \chi_{n_k}(x)\big| + \big| \chi_{n_k}(x_k-x)\big| \to 0,
\]
a contradiction.

(ii) By (i) we have to show only that every sequentially compact subset $K$ of $\big( E', \mu(E',E)\big)$ is also $\sigma(E',E'')$-sequentially compact. Let $S=\{\chi_{n}: n\in\NN\}$ be a sequence in $K$. Then there is a subsequence $S'=\{\chi_{n_k}: k\in\NN\}$ of $S$ which $\mu(E',E)$-converges to some $\chi\in K$. Then $\chi_{n_k}\to \chi$ in $\sigma(E',E)$  and hence also in $\sigma(E',E'')$ by the Grothendieck property. Thus $K$ is $\sigma(E',E'')$-sequentially compact.

To prove that $H$ has the Schur property let $S$ be a $\sigma(E',E)$-null sequence. Then $S$ is a $\sigma(E',E'')$-null sequence by the Grothendieck property. Therefore, by (i), $S$ converges to zero in   $\mu(E',E)$. Thus $H$ has the Schur property.  \qed
\end{proof}

Let $E$ be a locally convex space. Recall that a subset $A$ of $E'$ is called {\em $E$-limited} if
\[
\sup\big\{ |\chi(x_n)|: \chi\in A\big\} \to 0
\]
whenever $\{ x_n\}_{n\in\NN}$ is a weakly null sequence in $E$.
It is well known (see \cite[Exercise 3.12]{HMVZ}) that a Banach space $E$ has the Schur property if and only if the closed unit ball of the dual space $E'$ is an $E$-limited set. Below we generalize this result to barrelled spaces.
\begin{proposition} \label{p:Schur-E-bounded}
A barrelled space $(E,\tau)$ has the Schur property if and only if every $\sigma(E',E)$-bounded subset of $E'$ is an $E$-limited set.
\end{proposition}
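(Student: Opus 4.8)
The plan is to reduce the statement to a single standard duality fact: because $E$ is barrelled, the polars $A^\circ$ of the $\sigma(E',E)$-bounded subsets $A$ of $E'$ form a base of $\tau$-neighborhoods of zero. First I would recall why this holds. If $A\subseteq E'$ is $\sigma(E',E)$-bounded, then $A^\circ$ is absolutely convex and closed by definition, and it is absorbing because $\sup_{\chi\in A}|\chi(x)|<\infty$ for every $x\in E$; hence $A^\circ$ is a barrel, and since $E$ is barrelled it is a $\tau$-neighborhood of zero. Conversely, every $\tau$-neighborhood of zero contains a closed absolutely convex one, say $U$; by the bipolar theorem $U=(U^\circ)^\circ$, and $U^\circ$, being the polar of a neighborhood of zero, is equicontinuous and in particular $\sigma(E',E)$-bounded. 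This is the only place where barrelledness is used, and it is exactly what makes $\sigma(E',E)$-bounded sets behave like equicontinuous ones.

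From this I would extract the reformulation that carries the whole argument: for a sequence $\{x_n\}_{n\in\NN}$ in $E$,
\[
x_n\to 0 \text{ in } \tau \quad\iff\quad \sup_{\chi\in A}|\chi(x_n)|\to 0 \text{ for every } \sigma(E',E)\text{-bounded } A\subseteq E'.
\]
Indeed, $x_n\to 0$ in $\tau$ means precisely that for each such $A$ and each $\e>0$ one has $x_n\in\e A^\circ$ for all large $n$, i.e.\ $\sup_{\chi\in A}|\chi(x_n)|\le\e$ for all large $n$.

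With this in hand both implications are immediate. For necessity, assume $E$ has the Schur property, let $A\subseteq E'$ be $\sigma(E',E)$-bounded, and let $\{x_n\}_{n\in\NN}$ be a weakly null sequence in $E$; the Schur property gives $x_n\to 0$ in $\tau$, so by the reformulation $\sup_{\chi\in A}|\chi(x_n)|\to 0$, which says exactly that $A$ is an $E$-limited set. For sufficiency, assume every $\sigma(E',E)$-bounded subset of $E'$ is $E$-limited, and let $\{x_n\}_{n\in\NN}$ be weakly null in $E$; then $\sup_{\chi\in A}|\chi(x_n)|\to 0$ for every $\sigma(E',E)$-bounded $A\subseteq E'$, and the reformulation yields $x_n\to 0$ in $\tau$, so $E$ has the Schur property.

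There is no real obstacle in this proof; the content is entirely the duality observation in the first paragraph, and the rest is a transcription of the definitions of the Schur property and of an $E$-limited set through the neighborhood base of a barrelled space. It is worth noting only that barrelledness cannot simply be dropped, since without it $\sigma(E',E)$-bounded subsets of $E'$ need not be equicontinuous and the neighborhood-base description fails.
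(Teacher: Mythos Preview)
Your proof is correct and follows essentially the same approach as the paper's: both directions rest on the barrelledness-based duality between $\sigma(E',E)$-bounded subsets of $E'$ and $\tau$-neighborhoods of zero via polars, and the remaining steps are just unwinding the definitions of the Schur property and of $E$-limited sets. The only cosmetic difference is that you isolate the neighborhood-base reformulation $x_n\to 0$ in $\tau\iff\sup_{\chi\in A}|\chi(x_n)|\to 0$ for all such $A$ as a separate lemma before applying it, whereas the paper carries out each implication directly with the same ingredients.
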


\begin{proof}
Assume that $E$ has the Schur property. Let  $A$ be a $\sigma(E',E)$-bounded subset of $E'$ and let $\{ x_n\}_{n\in\NN}$ be a $\sigma(E,E')$-null sequence in $E$. Since $E$ is barrelled there is a $\tau$-neighborhood $U$ of zero in $E$ such that $A\subseteq U^\circ$. For every $\e>0$, by the Schur property, there is an $N\in\NN$ such that $x_n\in \e U$ for every $n>N$. Then
$
|\chi(x_n)|=\e\big| \chi\big((1/\e) x_n\big)\big| \leq \e \mbox{ for every } \chi\in A \mbox{ and } n>N.
$
Thus $A$ is $E$-limited.

Conversely, assume that every $\sigma(E',E)$-bounded subset of $E'$ is an $E$-limited set. Let $x_n\to 0$ in $\sigma(E,E')$ and let $U$ be an absolutely convex closed $\tau$-neighborhood of zero in $E$. Then there is an $N\in\NN$ such that
$
|\chi(x_n)|\leq 1 \mbox{ for every } \chi\in U^\circ \mbox{ and } n>N.
$
So $x_n\in U^{\circ\circ}=U$ for every $n>N$. Thus $x_n\to 0$ in $\tau$ and $E$ has the Schur property.  \qed
\end{proof}

Recall that a Tychonoff space $X$ is called an {\em $F$-space} if every cozero-set $A$ in $X$ is $C^\ast$-embedded. For  numerous equivalent conditions for a Tychonoff space $X$ being an $F$-space see \cite[14.25]{GiJ}. In particular, the Stone--\v{C}ech compactification $\beta \Gamma$ of a discrete space $\Gamma$ is a compact $F$-space.

In  general the Schur property does not imply weak respecting compactness. Using the non-reflexivity of Banach spaces $C(K)$, it is shown in \cite[Example~6 (p.~267)]{Wil} or \cite[Example~19.19]{Dom} that the space $H=\big( E', \mu(E',E)\big)$, where $E= \ell_\infty(\NN)=C(\beta \NN)$,  has the Schur property but does not weakly respect compactness. Below we generalize this result with a different proof.

\begin{proposition} \label{p:Schur-non-Glicksberg}
Let $K$ be an infinite compact $F$-space, $E=C(K)$ and set $H:=\big( E', \mu(E',E)\big)$. Then:
\begin{enumerate}
\item[{\rm (i)}] $H$ is a complete space and has the Schur property;
\item[{\rm (ii)}] $H$ does not weakly respect compactness.
\end{enumerate}
\end{proposition}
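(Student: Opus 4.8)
The plan is to prove the three assertions --- completeness of $H$, the Schur property of $H$, and the failure of weakly respecting compactness --- separately, using throughout that $E=C(K)$ is a Banach space, so $E'=M(K)$, and that $H'=E$ by the Mackey--Arens theorem; the $F$-space hypothesis on $K$ will be needed only for the Grothendieck property of $C(K)$. The Schur property is the quick part: $E$ is a Banach space, hence $E_w$ is angelic and in particular sequentially angelic; $E$ is barrelled with the $(DP)$ property, hence has the $(sDP)$ property by \cite[Proposition~3.3]{ABR}; and $C(K)$ has the Grothendieck property since $K$ is a compact $F$-space. Then Proposition \ref{p:Dunford-Pettis-Mackey}(ii) applies and yields that $H=\big(E',\mu(E',E)\big)$ has the Schur property.

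For the completeness of $H$ I would invoke Grothendieck's completeness theorem (see \cite{Kothe}): the completion of $\big(E',\mu(E',E)\big)$ consists of those linear forms $f$ on $E$ whose restriction to every absolutely convex $\sigma(E,E')$-compact subset of $E$ is $\sigma(E,E')$-continuous, so it suffices to prove that each such $f$ already lies in $E'$. If not, then $f$ is norm-unbounded, so pick $x_n$ in the closed unit ball $\ball_E$ of $E$ with $|f(x_n)|\geq n^2$ and set $y_n:=x_n/n$; the set $C:=\{y_n:n\in\NN\}\cup\{0\}$ is norm-null, hence weakly compact. Since a Banach space is a Krein space (being quasibarrelled and quasi-complete), $M:=\overline{\acx}(C)$ is absolutely convex and weakly compact, so $f|_M$ is continuous on the compact set $M$ and $f(M)$ is bounded in $\RR$ --- contradicting $|f(y_n)|=|f(x_n)|/n\geq n\to\infty$. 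Hence the completion of $H$ is $E'$, i.e. $H$ is complete.

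To obtain $\mathcal{C}(H_w)\neq\mathcal{C}(H)$ I would argue as follows. As $H'=E$, the space $H_w$ is $M(K)$ with the weak$^\ast$ topology, so the closed unit ball $\ball_{E'}$ of the Banach space $M(K)$ is weak$^\ast$ compact and thus lies in $\mathcal{C}(H_w)$; it then suffices to show $\ball_{E'}\notin\mathcal{C}(H)$, i.e. that $\ball_{E'}$ is not $\mu(E',E)$-compact. Since $K$ is infinite it has a non-isolated point, so, exactly as in the proof of Proposition \ref{p:Schur-Ck} (via Lemma 11.7.1 of \cite{Jar}), there are $f_n\in C(K)$ with $\|f_n\|\leq 1$, pairwise disjoint supports, and $f_n\to 0$ weakly. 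Disjointness of the supports makes the coordinate functional $\sum_m a_m f_m\mapsto a_n$ well defined and of norm at most $1$ on $\spn\{f_n:n\in\NN\}$; extend it by Hahn--Banach to $\chi_n\in\ball_{E'}$, so that $\chi_n(f_m)=\delta_{nm}$. As $\{f_n:n\in\NN\}\cup\{0\}$ is weakly compact and $C(K)$ is a Krein space, $W:=\overline{\acx}\{f_n:n\in\NN\}$ is absolutely convex and weakly compact, hence $p_W(\chi):=\sup_{x\in W}|\chi(x)|$ is a $\mu(E',E)$-continuous seminorm on $E'$ with $p_W(\chi_n-\chi_m)\geq|(\chi_n-\chi_m)(f_n)|=1$ for all distinct $n,m$. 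Therefore $\{\chi_n\}_{n\in\NN}$ has no $\mu(E',E)$-cluster point in $\ball_{E'}$: any cluster point $\chi$ would place two distinct $\chi_n,\chi_m$ inside $\chi+\tfrac13 W^\circ$, forcing $p_W(\chi_n-\chi_m)\leq\tfrac23<1$. Thus $\ball_{E'}$ is not $\mu(E',E)$-compact, which proves (ii).

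The main obstacle is the completeness statement: one must set up Grothendieck's completeness theorem for the Mackey topology and notice that the weakly compact ``test sets'' needed to rule out unbounded forms are precisely the closed absolutely convex hulls of norm-null sequences, supplied by the Krein property of the Banach space $C(K)$. The remaining ingredients are routine bookkeeping with standard facts: angelicity of the weak topology of a Banach space, $(DP)\Rightarrow(sDP)$ for barrelled spaces, the Grothendieck property of $C(K)$ over a compact $F$-space, Alaoglu's theorem, and the Krein property.
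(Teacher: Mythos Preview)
Your argument is correct. The Schur part of (i) is the paper's proof verbatim: $E_w$ angelic, $E$ has $(sDP)$ (barrelled $+$ $(DP)$), $E$ Grothendieck since $K$ is a compact $F$-space, then Proposition~\ref{p:Dunford-Pettis-Mackey}(ii).

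Where you diverge is in the completeness of $H$ and in (ii). For completeness the paper simply cites Exercise~3.41 of \cite{fabian-10}, whereas you unpack Grothendieck's completeness criterion and rule out unbounded linear forms via $\overline{\acx}$ of a norm-null sequence; this is a perfectly valid (and more self-contained) route. For (ii) the paper argues indirectly: if $\ball_{E'}$ were $\mu(E',E)$-compact, Grothendieck's theorem \cite[Theorem~3.11]{HMVZ} would make $\ball_{E'}$ $E$-limited, hence $E$ would have the Schur property by Proposition~\ref{p:Schur-E-bounded}, contradicting Proposition~\ref{p:Schur-Ck}. Your approach is instead constructive: from the weakly null sequence $\{f_n\}$ with disjoint supports you produce $\chi_n\in\ball_{E'}$ with $\chi_n(f_m)=\delta_{nm}$ and show $\{\chi_n\}$ is $p_W$-separated, hence has no $\mu(E',E)$-cluster point. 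Both work; the paper's proof is shorter because it recycles the $E$-limited characterization already developed, while yours is more elementary and avoids that machinery. Two minor remarks on your construction: you need $\|f_n\|=1$ (not just $\leq 1$) for the coordinate functional to have norm $\leq 1$, which does hold in the construction of Proposition~\ref{p:Schur-Ck} since $f_n(x_n)=1$; and you can bypass Hahn--Banach entirely by taking $\chi_n=\delta_{x_n}$.
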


\begin{proof}
(i) The space $H$ is complete by Exercise 3.41 of \cite{fabian-10}. The Banach space $E$ has the $(sDP)$ property by Theorem 13.43 of \cite{fabian-10} and has the Grothendieck property by Corollary 4.5.9 of \cite{Dales-Lau}.  Proposition 3.108 of \cite{fabian-10} implies that $E_w$ is angelic.  Thus $H$ has the Schur property by (ii) of Proposition \ref{p:Dunford-Pettis-Mackey}.

(ii) Consider the closed unit ball $B^\ast$ in $E'$. The Alaoglu theorem implies that $B^\ast$ is a weakly compact subset of $H$. Now assuming that $B^\ast$ is a compact subset of $H$ we apply Grothendieck's theorem \cite[Theorem~3.11]{HMVZ} to get that $B^\ast$ is $E$-limited. Therefore $E$ has the Schur property by Proposition \ref{p:Schur-E-bounded}. But this contradicts Proposition \ref{p:Schur-Ck}. Thus $H$ does not weakly respect compactness.  \qed
\end{proof}

M.~Valdivia \cite{Valdivia-91} and P.~Domanski and L.~Drewnowski \cite{DomDrew} proved independently  that a  Fr\'{e}chet space $E$ does not contain $\ell_1$ if and only if every $\mu(E',E)$-null sequence in $E'$ is strongly convergent to zero. In \cite[Theorem~2.1]{ruess} W.~Ruess generalized this result. 
\begin{proposition}[Ruess] \label{p:Dom-Drew}
Let $E$ be a locally complete  lcs whose every separable bounded set is metrizable. Then $E$ does not contain an isomorphic copy of $\ell_1$ if and only if every $\mu(E',E)$-null sequence in $E'$ is strongly convergent to zero. 
\end{proposition}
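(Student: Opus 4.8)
The plan is to prove the two implications separately; the substantive one is that the absence of an isomorphic copy of $\ell_1$ forces every $\mu(E',E)$-null sequence in $E'$ to be strongly null (i.e.\ $\beta(E',E)$-null). I would argue this by contradiction, combining the Rosenthal--Ruess theorem (Theorem \ref{ruess}) with the Krein theorem. So assume $\{\chi_n\}_{n\in\NN}\subseteq E'$ is $\mu(E',E)$-null but not $\beta(E',E)$-null. Then there are a bounded set $B\subseteq E$ and an $\e>0$ such that, after passing to a subsequence of $\{\chi_n\}$, one can choose $x_n\in B$ with $|\chi_n(x_n)|\ge\e$; note also that $\mu(E',E)$-nullity entails $\chi_n(x)\to 0$ for every fixed $x\in E$. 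By Theorem \ref{ruess} the space $E$ has the Rosenthal property, so the bounded sequence $\{x_n\}$ has a subsequence which is either (a) equivalent to the unit basis of $\ell_1$, or (b) weakly Cauchy. Alternative (a) is impossible: the closed span $\overline{\spn}\{x_{n_k}\}$ in $E$ is a closed subspace of $E$, hence itself locally complete, and it is linearly homeomorphic to a subspace of $\ell_1$; but a locally complete normed space is complete, so this subspace is isomorphic to $\ell_1$, contrary to hypothesis. Relabelling, $\{x_n\}$ is therefore weakly Cauchy.

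The next step, which I expect to be the crux, is to manufacture an absolutely convex $\sigma(E,E')$-compact set on which $\{\chi_n\}$ converges uniformly. Since $\{x_n\}$ is weakly Cauchy, the consecutive differences $d_m:=x_{m+1}-x_m$ form a $\sigma(E,E')$-null sequence; put $K:=\overline{\acx}\big(\{d_m:m\in\NN\}\cup\{0\}\big)$. Then $K$ is bounded, being the closed absolutely convex hull of a bounded sequence, hence separable and therefore metrizable by hypothesis; and $K$ is a Banach disc because $E$ is locally complete. From these two facts one argues — and this is exactly where both hypotheses are genuinely used — that $K$ is $\mu(E,E')$-complete, whereupon the Krein theorem yields that $K$ is $\sigma(E,E')$-compact. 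Now $\mu(E',E)$-nullity of $\{\chi_n\}$ gives $\gamma_n:=\sup_{x\in K}|\chi_n(x)|\to 0$, and since $x_m-x_{m'}\in|m-m'|\cdot K$ we obtain $|\chi_n(x_m-x_{m'})|\le|m-m'|\,\gamma_n$. In particular $|\chi_n(x_n)-\chi_n(x_{n-j})|\le j\gamma_n\to 0$ for each fixed $j$, so after a further subsequence $|\chi_n(x_{n-j})|\ge\e/2$ whenever $n$ is large relative to $j$. A Schur-type gliding-hump selection of indices — arranging the off-diagonal quantities $|\chi_{n_i}(x_{n_k})|$ to be summably small, using $\chi_m(x)\to 0$ for fixed $x$ on one side and the bound $\gamma_n$ on the differences on the other — then exhibits $\{x_{n_i}\}$ as equivalent to the unit basis of $\ell_1$, the desired contradiction.

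For the converse I argue the contrapositive: suppose $M$ is a subspace of $E$ isomorphic to $\ell_1$ (necessarily closed, being complete). Let $\{x_n\}\subseteq M$ correspond to the unit basis and let $\xi_n\in M'$ be the biorthogonal functionals, i.e.\ the images of the unit vectors of $c_0\subseteq\ell_\infty=(\ell_1)'$. Since $\ell_1$ has the Schur property, $\mu(M',M)$ is the topology of uniform convergence on the norm-compact subsets of $M$; as such subsets of $\ell_1$ have uniformly small tails, $\xi_n\to 0$ in $\mu(M',M)$, while $\|\xi_n\|_{M'}=1$ for every $n$. Because $M$ is a topological subspace of $E$, the restriction map $E'\to M'$ is a quotient map for the corresponding Mackey topologies; lifting the $\mu(M',M)$-null sequence $\{\xi_n\}$ through it produces $\{\chi_n\}\subseteq E'$ with $\chi_n|_M=\xi_n$ and $\chi_n\to 0$ in $\mu(E',E)$. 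But then $\sup_{x\in B_M}|\chi_n(x)|=\|\xi_n\|_{M'}=1$, and the unit ball $B_M$ of $M$ is bounded in $E$, so $\{\chi_n\}$ is not $\beta(E',E)$-null, contradicting the assumption.

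The main obstacle is thus the weak-compactness step in the first implication: turning the closed absolutely convex hull of a weakly null sequence into a $\sigma(E,E')$-compact set via the Krein theorem. This is precisely where local completeness and metrizability of separable bounded sets are indispensable — a bounded weakly Cauchy sequence need not be relatively weakly compact in general — and it is the reason the equivalence can fail for arbitrary locally complete spaces. A secondary technical point is the lifting of the $\mu$-null sequence through the (generally non-metrizable) Mackey quotient $E'\to M'$ in the converse.
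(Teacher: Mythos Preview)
The paper does not prove this proposition at all: it is quoted as Theorem~2.1 of \cite{ruess} and used as a black box, so there is no ``paper's own proof'' to compare your attempt against.

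That said, your forward argument breaks at the final step. Having reduced (via Theorem~\ref{ruess}) to the case where $\{x_n\}$ is weakly Cauchy, you propose a gliding hump that ``exhibits $\{x_{n_i}\}$ as equivalent to the unit basis of $\ell_1$''. This is impossible: every subsequence of a weakly Cauchy sequence is again weakly Cauchy, whereas the unit basis of $\ell_1$ is not (apply $\phi=(1,0,1,0,\dots)\in\ell_\infty$). No gliding hump can manufacture an $\ell_1$-sequence out of a weakly Cauchy one, so the contradiction you aim for cannot be reached this way. The fix is more direct than what you attempt: diagonalise so that $|\chi_{n_k}(x_{n_j})|$ is small for $j<k$ (using $\chi_n(x)\to 0$ for fixed $x$), set $y_k:=x_{n_k}-x_{n_{k-1}}$; then $\{y_k\}$ is weakly \emph{null} with $|\chi_{n_k}(y_k)|\ge\e/2$, and local completeness makes $\overline{\acx}\{y_k\}$ weakly compact (this is Theorem~5.1.11 of \cite{bonet}, invoked later in the paper --- it also supersedes your Krein step, where the passage from ``Banach disc and metrizable'' to ``$\mu(E,E')$-complete'' is asserted but not argued). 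Mackey convergence of $\{\chi_n\}$ on this compact set then forces $\chi_{n_k}(y_k)\to 0$, the desired contradiction.

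Your converse also has a gap. Granting that the restriction $(E',\mu(E',E))\to(M',\mu(M',M))$ is an open linear surjection, lifting a \emph{null sequence} through it is not automatic when the domain is not metrizable: openness lets you lift individual points into prescribed neighbourhoods, but not coherently into a single null sequence. You need either a different construction of the $\chi_n$ (with control on all of $E$, not just on $M$) or an additional argument for sequential lifting here.
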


It is well known (see \cite[Proposition~11.6.2]{Jar}) that an lcs $E$ is Montel if and only if it is a quasi-complete quasibarrelled (equivalently, barrelled see \cite[Corollary~5.1.10]{bonet}) space and every equicontinuous set in $E'$ is relatively compact for $\beta(E',E)$.
It is proved in \cite{BLV} (see also \cite[Theorem~9]{Bonet-Lin-93}) that a Fr\'{e}chet space $E$ is Montel if and only if every $\sigma(E',E)$-convergent sequence in $E'$ is $\beta(E',E)$-convergent. Using Proposition \ref{p:Dom-Drew} we obtain a similar result for a wider class of locally convex spaces.
\begin{proposition} \label{p:Rosen-Montel}
Let $E$ be an lcs whose every separable bounded set is metrizable. Then $E$ is a Montel space  if and only if the following three conditions hold:
\begin{enumerate}
\item[{\rm (i)}] $E$ is a quasi-complete quasibarrelled space;
\item[{\rm (ii)}]  every $\sigma(E',E)$-convergent sequence in $E'$ is $\beta(E',E)$-convergent;
\item[{\rm (iii)}]  $E$ has the Schur property.
\end{enumerate}
\end{proposition}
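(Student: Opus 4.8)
The plan is to derive the equivalence from three ingredients that are already available: Ruess's extension of the Rosenthal $\ell_1$-theorem (Theorem~\ref{ruess}), Ruess's dual characterization of locally complete spaces not containing $\ell_1$ (Proposition~\ref{p:Dom-Drew}), and the classical fact recalled above that a quasi-complete quasibarrelled space is Montel precisely when it is semi-Montel, i.e.\ when every bounded subset is relatively compact.

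For the forward implication I would argue as follows. If $E$ is Montel, then it is barrelled (hence quasibarrelled) and semi-Montel; since closed bounded subsets are then compact and hence complete, $E$ is quasi-complete, which gives (i). Condition (iii) follows from Proposition~\ref{p:Schur-Montel-respected}: a semi-Montel space weakly respects compactness and therefore has the Schur property. For (ii) I would invoke the characterization of Montel spaces by relative $\beta(E',E)$-compactness of equicontinuous sets recalled above. If $\chi_n\to\chi$ in $\sigma(E',E)$, then $A:=\{\chi_n:n\in\NN\}\cup\{\chi\}$ is $\sigma(E',E)$-bounded, hence equicontinuous by barrelledness, hence its $\beta(E',E)$-closure $C$ is $\beta(E',E)$-compact; on $C$ the coarser Hausdorff topology $\sigma(E',E)$ coincides with $\beta(E',E)$ (a continuous bijection from a compact space onto a Hausdorff space is a homeomorphism), so $\chi_n\to\chi$ also in $\beta(E',E)$.

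For the converse, assume (i)--(iii). First I would record the standing consequences: by (i) the space $E$ is quasi-complete, hence sequentially complete and locally complete, so together with the metrizability of separable bounded sets, Proposition~\ref{p:Schur-Ruess}(i) and Theorem~\ref{ruess} give that $E\in\mathbf{RES}$, that $E$ has the $\CP$-property and is a $\mu$-space, and that $E$ has the Rosenthal property; by (iii) and Proposition~\ref{p:weak-sequential-complete-LCS} the space $E$ is also weakly sequentially complete. The heart of the argument is then to deduce from (ii) that $E$ contains no isomorphic copy of $\ell_1$: any $\mu(E',E)$-null sequence in $E'$ is $\sigma(E',E)$-null, hence $\beta(E',E)$-convergent by (ii), and its limit is necessarily $0$; so every $\mu(E',E)$-null sequence in $E'$ is strongly convergent to zero, which by Proposition~\ref{p:Dom-Drew} is equivalent to $E$ not containing $\ell_1$. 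With this in hand I would prove $E$ semi-Montel: given a bounded sequence $\{x_n\}$ in $E$, the Rosenthal property yields a subsequence $\{y_k\}$ that is either weakly Cauchy or equivalent to the unit basis of $\ell_1$; the second alternative is impossible, because the closed span $M$ of such a subsequence is closed in the sequentially complete $E$, hence sequentially complete, hence its isomorphic image $Y\subseteq\ell_1$ is a sequentially complete subspace containing the dense subspace $\spn\{e_k\}$, forcing $Y=\ell_1$ and exhibiting a copy of $\ell_1$ in $E$, a contradiction. So $\{y_k\}$ is weakly Cauchy, hence weakly convergent by weak sequential completeness, hence $\tau$-convergent by the Schur property. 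Thus every bounded subset of $E$ is relatively sequentially compact and, since $E\in\mathbf{RES}$, relatively compact; that is, $E$ is semi-Montel. Finally, a quasi-complete quasibarrelled space is barrelled, so $E$ is a barrelled semi-Montel space, i.e.\ a Montel space.

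I expect the main obstacle to be exactly the implication ``(ii) $\Rightarrow$ $E$ does not contain an isomorphic copy of $\ell_1$''; everything else is bookkeeping with results already assembled. The insight needed is to recognize that condition (ii), via the trivial inclusion $\mu(E',E)\ge\sigma(E',E)$, is precisely the hypothesis of the Ruess/Doma\'nski--Drewnowski theorem (Proposition~\ref{p:Dom-Drew}), together with the minor technical point that a sequence ``equivalent to the unit basis of $\ell_1$'' produces an honest closed copy of $\ell_1$ inside $E$ once sequential completeness is available. From there Ruess's Rosenthal dichotomy and the $\mathbf{RES}$ formalism close the argument.
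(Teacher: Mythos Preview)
Your proof is correct and follows essentially the same approach as the paper: both directions hinge on Proposition~\ref{p:Dom-Drew} to translate condition (ii) into ``$E$ does not contain $\ell_1$'', and then invoke the Rosenthal property together with the Schur property to force relative compactness of bounded sets. The differences are cosmetic: for the forward implication of (ii) the paper argues via reflexivity of $E$ and the Schur property of $E'_\beta$ (since $E'_\beta$ is again Montel), whereas you argue directly from relative $\beta(E',E)$-compactness of equicontinuous sets; for the converse the paper derives a contradiction from a uniformly discrete sequence in a non-precompact bounded set, whereas you show directly that every bounded sequence has a convergent subsequence and then invoke $E\in\mathbf{RES}$. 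Your explicit verification that a sequence equivalent to the unit basis actually yields an isomorphic copy of $\ell_1$ (using sequential completeness of the closed span) is a detail the paper leaves implicit.
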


\begin{proof}
Assume that $E$ is a Montel space. Then $E$  is a quasi-complete barrelled space by Proposition 11.6.2 of \cite{Jar}, and $E$ and $E'_\beta$ have the Schur property by Proposition \ref{p:Schur-Montel-respected}. The reflexivity of $E$ and the Schur property of $E'_\beta$ imply (ii).

Conversely, assume that (i)-(iii) hold. Since $E$ is quasibarrelled it is sufficient to show that $E$ is semi-Montel. To this end, we have to prove that every closed bounded subset $A$ of $E$ is compact. Since $E$ is quasi-complete, it suffices to show that $A$ is precompact, see Theorem 3.4.1 of \cite{NaB}. Suppose for a contradiction that $A$ is not precompact. Then, by Theorem 5 of \cite{BGP}, there is a neighborhood $U$ of zero and a sequence $S=\{ a_n: n\in\NN\}$ in $A$ such that
\begin{equation} \label{equ:Rosen-Montel-1}
a_n - a_m \not\in U \; \mbox{ for every } n\not= m.
\end{equation}
Proposition \ref{p:Dom-Drew} and (ii) imply that $E$ does not contain $\ell_1$. Therefore, by the Rosenthal property (Proposition \ref{p:Schur-Ruess}), $S$ contains a weakly Cauchy subsequence $\{ a_{n_k}: k\in\NN\}$. In particular $a_{n_{k+1}} -a_{n_k}\to 0$ in the weak topology, and hence $a_{n_{k+1}} -a_{n_k}\to 0$ in the topology of $E$ by the Schur property. But this contradicts (\ref{equ:Rosen-Montel-1}). Thus $E$ is semi-Montel.  \qed
\end{proof}

The next proposition extends (a) of Proposition 11 in \cite{Bonet-Lin-93} and has a similar proof.
\begin{proposition} \label{p:dual-Schur-Schur}
Let $E$ be a $c_0$-barrelled quasi-complete Mackey space whose every bounded set is metrizable. If $E$ has the Schur property, then also $\big( E', \mu(E',E)\big)$ has the Schur property. The converse holds if $U^\circ$ is $\sigma(E',E)$-sequentially compact for every neighborhood $U$ of zero in $E$, in particular, if $E$ is separable.
\end{proposition}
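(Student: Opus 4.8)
The plan is to prove the two implications separately, using the Mackey--Arens theorem to identify the weak topology of $H:=(E',\mu(E',E))$ with $\sigma(E',E)$; thus ``$H$ has the Schur property'' means precisely that every $\sigma(E',E)$-null sequence in $E'$ is $\mu(E',E)$-null. Throughout I will use that $\mu(E',E)$ is the topology of uniform convergence on the absolutely convex $\sigma(E,E')$-compact subsets of $E$, and that $\tau=\mu(E,E')$ (as $E$ is Mackey) is the topology of uniform convergence on the absolutely convex $\sigma(E',E)$-compact subsets of $E'$.

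\emph{Forward implication.} Assume $E$ has the Schur property. Since $E$ is quasi-complete with metrizable bounded sets, $E\in\mathbf{RES}$ by (i) of Proposition \ref{p:Schur-Ruess}, so Theorem \ref{t:Rosenthal-Ruess-Schur-basic} applies and gives that every weakly functionally bounded subset of $E$ is relatively $\tau$-compact. In particular, if $K$ is an absolutely convex $\sigma(E,E')$-compact subset of $E$, then $K$ is weakly functionally bounded, hence relatively $\tau$-compact, and being weakly closed it is $\tau$-closed, so $K$ is $\tau$-compact; therefore $\tau|_K=\sigma(E,E')|_K$. Now let $\{\chi_n\}_{n\in\NN}$ be a $\sigma(E',E)$-null sequence in $E'$. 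Since $E$ is $c_0$-barrelled, $\{\chi_n\}_{n\in\NN}$ is equicontinuous, say $\{\chi_n\}_{n\in\NN}\subseteq U^\circ$ for a neighborhood $U$ of zero. An equicontinuous sequence of functionals that converges to $0$ pointwise on $E$ converges to $0$ uniformly on every $\tau$-precompact set, in particular on the $\tau$-compact set $K$. As $K$ was an arbitrary absolutely convex $\sigma(E,E')$-compact subset of $E$, we conclude $\chi_n\to 0$ in $\mu(E',E)$, so $H$ has the Schur property.

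\emph{Converse.} Assume additionally that $U^\circ$ is $\sigma(E',E)$-sequentially compact for every neighborhood $U$ of zero in $E$ (which holds when $E$ is separable, since then each weak$^\ast$-closed equicontinuous set $U^\circ$ is weak$^\ast$-compact and weak$^\ast$-metrizable, hence weak$^\ast$-sequentially compact), and assume $H$ has the Schur property. Let $\{x_n\}_{n\in\NN}$ be $\sigma(E,E')$-null in $E$; I must show $x_n\to 0$ in $\tau$. Fix a closed absolutely convex neighborhood $U$ of zero and suppose, for a contradiction, that after passing to a subsequence $x_n\notin U=U^{\circ\circ}$ for all $n$; choose $\chi_n\in U^\circ$ with $|\chi_n(x_n)|>1$. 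By the extra hypothesis, pass to a further subsequence so that $\chi_{n_k}\to\chi$ in $\sigma(E',E)$ for some $\chi\in U^\circ$; then $\chi_{n_k}-\chi\to 0$ in $\sigma(E',E)$, hence in $\mu(E',E)$ by the Schur property of $H$. Now $S:=\{x_n:n\in\NN\}\cup\{0\}$ is weakly compact, and since $(E,\mu(E,E'))=(E,\tau)$ is quasi-complete, $E$ is a Krein space, so $K:=\overline{\acx}(S)$ is an absolutely convex weakly compact subset of $E$ containing every $x_n$; hence $(\chi_{n_k}-\chi)(x_{n_k})\to 0$. Since also $\chi(x_{n_k})\to 0$ by weak nullity of $\{x_n\}_{n\in\NN}$, we get $\chi_{n_k}(x_{n_k})\to 0$, contradicting $|\chi_{n_k}(x_{n_k})|>1$. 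Thus $x_n\in U$ eventually for every closed absolutely convex neighborhood $U$ of zero, i.e.\ $x_n\to 0$ in $\tau$, and $E$ has the Schur property.

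The step I expect to be the crux is the passage ``$K$ weakly compact $\Rightarrow$ $K$ is $\tau$-compact'' in the forward implication: this is exactly where the Schur property of $E$ and the $\mathbf{RES}$/angelicity input of Theorem \ref{t:Rosenthal-Ruess-Schur-basic} are needed, and it is what forces the hypotheses of quasi-completeness and metrizability of bounded sets. In the converse, the analogous key point is the reduction to a single absolutely convex weakly compact set $K$, which relies on Krein's theorem together with the weak$^\ast$-sequential compactness of $U^\circ$; the remaining manipulations (the Mackey-topology bookkeeping and the uniform convergence of equicontinuous pointwise-null sequences on precompact sets) are routine.
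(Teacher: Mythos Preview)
Your proof is correct and follows essentially the same route as the paper's. In the forward direction you unpack inline what the paper packages via Proposition~\ref{p:Krein-characterization}(ii) and the cited fact $\sigma(E',E)|_{U^\circ}=\tau_k|_{U^\circ}$ (Horv\'ath), namely that an equicontinuous pointwise-null sequence converges uniformly on $\tau$-compact sets; in the converse direction your argument (contradiction via $\chi_n\in U^\circ$, weak$^\ast$ subsequential limit, Krein's theorem for $K=\overline{\mathrm{acx}}(S)$, and the estimate $|\chi_{n_k}(x_{n_k})|\leq |(\chi_{n_k}-\chi)(x_{n_k})|+|\chi(x_{n_k})|$) is identical to the paper's.
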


\begin{proof}
It follows from Theorem \ref{t:respecting-quasi-complete-metriz} and Proposition \ref{p:Schur-Ruess} that $E$ weakly respects compactness. Now Proposition \ref{p:Krein-characterization} implies that $(E',\mu(E',E))$ is a subspace of $\CC(E,\mathbf{F})$, so $\mu(E',E)=\tau_k|_{E'}$.

Let $\{\chi_n:n\in\NN\}$ be a $\sigma(E',E)$-null sequence in $E'$. Then the $c_0$-barrelledness of $E$ implies that the sequence $S=\{\chi_n:n\in\NN\}\cup\{ 0\}$ is equicontinuous. So $S\subseteq U^\circ$ for some neighborhood $U$ of zero in $E$. Since $\sigma(E',E)|_{U^\circ}=\tau_k|_{U^\circ}$ by Proposition 9.3.8 of \cite{horvath}, we obtain $\sigma(E',E)|_{U^\circ}=\mu(E',E)|_{U^\circ}$. Thus $\chi_n\to 0$ in $\mu(E',E)$.

Now assume that $U^\circ$ is $\sigma(E',E)$-sequentially compact for every neighborhood $U$ of zero in $E$ and every $\sigma(E',E)$-convergent sequence in $E'$ is $\mu(E',E)$-convergent. Suppose for a contradiction that $E$ is not a Schur space. So there is a weakly null sequence $S=\{x_n:n\in\NN\}$ in $E$ such that $x_n\not\to 0$ in $E$. Hence there exists an absolutely convex closed neighborhood $U$ of zero in $E$ such that $S\setminus U$ is infinite. By passing to a subsequence we can assume that $S\cap U=\emptyset$. For every $n\in\NN$ choose $\chi_n\in U^\circ$ such that $|\chi_n(x_n)|>1$. Since $U^\circ$ is $\sigma(E',E)$-sequentially compact, by passing to a subsequence if needed, we can additionally assume that $\chi_n$ weakly converges to some $\chi\in U^\circ$. By the Schur property we obtain $\chi_n \to \chi$ in $\mu(E',E)$.
Since the set $K=\overline{\mathrm{acx}}(S)$ is weakly compact by Theorem 5.1.11 of \cite{bonet}, it follows that
\[
A_n:= \sup\big\{ |(\chi_n -\chi)(x)| : x\in K\big\} \to 0 \quad \mbox{ at } n\to \infty.
\]
Therefore
\[
1< \big| \chi_n(x_n)\big| \leq \big| (\chi_n -\chi)(x_n)\big| + \big| \chi(x_n)\big| \leq A_n +\big| \chi(x_n)\big| \to 0,
\]
a contradiction.  \qed
\end{proof}

\medskip
\noindent {\bf Acknowledgments}. I am deeply indebted to  Professor J. Bonet for useful remarks and sending me \cite{Bonet-Lin-93}. I wish to thank the referee for careful reading and useful suggestions.

\bibliographystyle{amsplain}

\end{document}